\newcommand\footnoteref[1]{\protected@xdef\@thefnmark{\ref{#1}}\@footnotemark}
\newtheorem{lemma}{Lemma}[section]
\newtheorem{thm}[lemma]{Theorem}
\newtheorem{prop}[lemma]{Proposition}
\newtheorem{cor}[lemma]{Corollary}
\newtheorem*{cor*}{Corollary}
\newtheorem{claim}[lemma]{Claim}
\theoremstyle{definition}
\newtheorem{defn}[lemma]{Definition}
\theoremstyle{remark}
\newtheorem{rem}[lemma]{Remark}
\newcommand{\matR} {\ensuremath {\mathbb{R}}}
\newcommand{\matQ} {\ensuremath {\mathbb{Q}}}
\newcommand{\matZ} {\ensuremath {\mathbb{Z}}}
\newcommand{\matH} {\ensuremath {\mathbb{H}}}
\newcommand{\calC} {\ensuremath {\mathcal{C}}}
\newcommand{\calD} {\ensuremath {\mathcal{D}}}
\newcommand{\calH}{\ensuremath {\mathcal{H}}}
\newcommand{\rma}{\ensuremath {\mathrm{a}}}
\newcommand{\rmb}{\ensuremath {\mathrm{b}}}
\newcommand{\rmu}{\ensuremath {\mathrm{u}}}
\newcommand{\rmv}{\ensuremath {\mathrm{v}}}
\newcommand{\rmx}{\ensuremath {\mathrm{x}}}
\newcommand{\w}[1]{{\color{white} #1}}
\newcommand{\SO}{\ensuremath {\mathrm{SO}}}
\newcommand{\PO}{\ensuremath {\mathrm{PO}}}
\newcommand{\GL}{\ensuremath {\mathrm{GL}}}
\newcommand{\Or}{\ensuremath {\mathrm{O}}}
\newcommand{\Isom}{\ensuremath {\mathrm{Isom}}}
\newcommand{\Stab}{\mathrm{Stab}}
\newcommand\reallywidehat[1]{%
\savestack{\tmpbox}{\stretchto{%
  \scaleto{%
    \scalerel*[\widthof{\ensuremath{#1}}]{\kern-.6pt\bigwedge\kern-.6pt}%
    {\rule[-\textheight/2]{1ex}{\textheight}}
  }{\textheight}%
}{0.5ex}}%
\stackon[1pt]{#1}{\tmpbox}%
}
\author{Alexander Kolpakov}
\address{Institut de Math\'ematiques \\ Universit\'e de Neuch\^atel \\ Rue Emile-Argand 11, CH-2000 Neuch\^atel (Switzerland)}
\email{kolpakov dot alexander at gmail dot com}
\author{Stefano Riolo}
\address{Dipartimento di Matematica \\ Universit\`a di Bologna \\ Piazza Porta San Donato 5, I-40126 (Italy)}
\email{stefano\w{a}dot\w{b}riolo\w{c}at\w{d}unibo\w{e}dot\w{f}it}
\author{Leone Slavich}
\address{Dipartimento di Matematica ``F. Casorati'' \\ Universit\`a di Pavia \\ Via Ferrata 5, I-27100 Pavia (Italy)}
\email{leone dot slavich at gmail dot com}
\thanks{A. K. and S. R. were supported by the SNSF project no. PP00P2-170560.}
\title{Embedding non-arithmetic hyperbolic manifolds}
\begin{document}

\begin{abstract}
This paper shows that many hyperbolic manifolds obtained by glueing arithmetic pieces embed into higher-dimensional hyperbolic manifolds as codimension-one totally geodesic submanifolds. As a consequence, many Gromov--Pyatetski-Shapiro and Agol--Belolipetsky--Thomson non-arithmetic  manifolds embed geodesically. Moreover, we show that the number of commensurability classes of hyperbolic manifolds with a representative of volume $\leq v$ that bounds geometrically is at least $v^{Cv}$, for $v$ large enough. 
\end{abstract}

\maketitle

\section{Introduction}

A complete finite-volume hyperbolic $n$-manifold $M$ \emph{embeds geodesically} if it can be realised as a totally geodesic embedded submanifold of a complete finite-volume hyperbolic $(n+1)$-manifold $X$. 

There are two main tools known so far to prove that a given manifold as above embeds: first, arithmetic techniques such as those used in \cite{GL,GPS,KRS,LR2,R}, and, second, explicit geometric and combinatorial constructions using Coxeter polytopes as in \cite{KMT,MZ,Mar,S1,S2}. The manifolds which are shown to embed geodesically in those papers are arithmetic. Some non-arithmetic $3$-manifolds which embed geodesically are produced in \cite{KR} by means of a right-angled hyperbolic $4$-polytope. In this paper we show that many non-arithmetic manifolds of arbitrary dimension embed geodesically.

A \emph{piece} $P$ of a hyperbolic manifold $M=\matH^n/\Gamma$ is a complete, connected hyperbolic $n$-manifold with totally geodesic boundary obtained by cutting $M$ open along a collection of pairwise disjoint, embedded, totally geodesic hypersurfaces $S_1, \ldots, S_m$. 

Let us fix a totally real number field $k$. Let $M_j=\matH^n/{\Gamma_j}$, $j=1,\ldots,s$ (possibly $s=1$), be an arithmetic hyperbolic manifold of simplest type with quadratic form $f_j$ defined over $k$ (c.f. Section \ref{sec:simplest_type}). Let $P_j$ be a piece of $M_j$, and $M$ be a complete finite-volume hyperbolic manifold obtained by glueing the boundary components of $P_1, \ldots, P_s$ in pairs via isometries.

We prove the following:

\begin{thm}\label{thm:main}
If each $\Gamma_j$ is contained in $\Or(f_j,k)$, then $M$ embeds geodesically. If $M$ is orientable, the manifold into which it embeds can be chosen to be orientable.
\end{thm}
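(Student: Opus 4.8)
The plan is to construct the ambient $(n+1)$-manifold $X$ by a "doubling along a hyperplane" strategy, realizing each piece $P_j$ as a totally geodesic hypersurface in an arithmetic $(n+1)$-manifold, and then glueing these pieces of $(n+1)$-manifolds together to match the glueing pattern of $M$. The key idea is that each $M_j = \matH^n/\Gamma_j$ with $\Gamma_j \subset \Or(f_j,k)$ sits inside $\matH^{n+1}$ as a totally geodesic hypersurface with respect to the quadratic form $g_j = f_j \oplus \langle a_j \rangle$ over $k$, for a suitable $a_j \in k$; the reflection in this hyperplane is an element of $\Or(g_j,k)$. So each $M_j$ bounds (or is a two-sided hypersurface inside) an arithmetic $(n+1)$-manifold $N_j$ of simplest type coming from $g_j$, at least after passing to a suitable finite-index subgroup (Lemma/arithmetic input I may assume).

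**First I would** make the separate arithmetic groups compatible. The forms $f_1,\dots,f_s$ are all defined over the same field $k$, but need not be similar. However, the totally geodesic hypersurfaces $S_i$ along which we cut inherit a quadratic form (of rank $n$) defined over $k$, and the glueing isometries of $M$ identify boundary hypersurfaces in pairs — for the glued manifold $M$ to be hyperbolic these identifications are isometries of hyperbolic $(n-1)$-manifolds. The crucial point is to choose the extra coefficients $a_j$ so that the boundary hypersurfaces, viewed now as codimension-two totally geodesic submanifolds of the $N_j$'s (i.e. as hypersurfaces of the hypersurfaces), have isometric tubular/collar structure in the $(n+1)$-dimensional pieces, so that the pieces of $N_j$ cut along these can be glued. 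Concretely I would cut each $N_j$ along the totally geodesic hypersurface dual to the $S_i$'s (the hypersurface in $N_j$ whose intersection with the copy of $M_j$ is exactly $S_i$), obtaining pieces $Q_j$ of $N_j$ with $P_j$ embedded totally geodesically, and then glue the $Q_j$ in pairs along their boundary components following the same combinatorial pattern as the $P_j$'s glue to form $M$, using isometries extending the given ones.

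**The main obstacle** will be ensuring that these higher-dimensional glueing isometries exist and are compatible — i.e. that after extending the $n$-dimensional glueing isometry of $M$ to an $(n+1)$-dimensional isometry of a neighborhood, the boundary hypersurfaces of the $Q_j$ actually match up isometrically. This requires a careful choice of the coefficients $a_j$: I would want them chosen so that, perhaps after scaling $g_j$ by a totally positive element of $k$, all the forms $g_j$ restricted to the relevant sublattices become equivalent over $k$ (or at least commensurable in the appropriate sense), so that the arithmetic groups $\Stab$ of the boundary hypersurfaces inside $\Or(g_j,k)$ can be made to match. A clean way to enforce this is to take \emph{all} the $a_j$ equal to a single $a \in k$ (or even $a=1$ after rescaling) so that every $g_j = f_j \oplus \langle a \rangle$; then the boundary hypersurface forms are $f_j|_{S_i} \oplus \langle a \rangle$, and since the glueing isometry of $M$ already identifies $f_j|_{S_i}$ with $f_{j'}|_{S_{i'}}$ (up to the action of $\Or$ over $k$, as they are the same hyperbolic manifold-with-structure), the extra $\langle a \rangle$ summand is automatically matched.

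**Finally**, having built $X$ as a glueing of the $Q_j$, I would check: (i) $X$ is a complete finite-volume hyperbolic $(n+1)$-manifold — completeness and finite volume are inherited from the $N_j$ since the glueing is along compact-or-cusped totally geodesic hypersurfaces and the $Q_j$ have finite volume; (ii) $M$ sits inside $X$ as the union of the $P_j \subset Q_j$, glued consistently, hence totally geodesically embedded; (iii) for the orientability addendum, if $M$ is orientable one replaces each $N_j$ by its orientation double cover if necessary and chooses the extra coefficient $a$ (equivalently the side of the hyperplane) so that the reflection swapping the two sides is \emph{not} used in the glueing — equivalently, one works with the subgroup of $\Or(g_j,k)$ preserving orientation and the upper half-space, and the glueing isometries are chosen orientation-compatibly, which is possible precisely because $M$ is orientable so its glueing isometries already reverse-or-preserve orientation consistently. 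The one genuinely delicate verification is that passing to finite-index subgroups of the $\Or(g_j,k)$ to get manifolds (torsion-free, and with the $S_i$ embedded and two-sided) can be done \emph{simultaneously and compatibly} across all $j$ — this is a standard separability/LERF-type argument for arithmetic groups, and it is the step I would expect to require the most bookkeeping.
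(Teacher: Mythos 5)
Your overall strategy is the same as the paper's: extend each $f_j$ by the \emph{same} coefficient to $g_j=f_j\oplus\langle q\rangle$, realise $M_j$ as a totally geodesic hypersurface of an arithmetic $(n+1)$-manifold, cut that manifold along hypersurfaces ``dual'' to the $S_i$, and reglue the resulting $(n+1)$-dimensional pieces following the combinatorics of $M$. But there is a genuine gap at the central step: you simply posit ``the hypersurface in $N_j$ whose intersection with the copy of $M_j$ is exactly $S_i$''. No such object exists a priori. The natural candidates are the projections of the $\Lambda$-hyperplanes orthogonal to the copy of $\mathbb{H}^n$ along lifts of the $S_i$; these are in general only \emph{immersed}, they may intersect one another, and they may meet $M_j$ in components other than $S_i$. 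Making them embedded, pairwise disjoint, and meeting $M_j$ exactly in the $S_i$ is precisely where the work lies, and your plan to handle it by ``a standard separability/LERF-type argument, simultaneously and compatibly across all $j$'' does not work as stated for two reasons. First, you cannot pass to an arbitrary finite-index subgroup of the ambient lattice: the manifold $M$ is a fixed glueing of the fixed pieces $P_j$, so the cover you pass to must still contain $\Gamma_j$, otherwise you only prove that a finite cover of $M$ embeds, which is a strictly weaker statement (the paper stresses that avoiding such covers is the main difficulty). Second, the subgroup you must separate is the one generated by $\Gamma_j$ together with the stabilisers of all the vertical hyperplanes, and before separability (GFERF for simplest-type lattices, after Bergeron--Haglund--Wise) can be invoked one has to prove that this subgroup is geometrically finite and is the fundamental group of the abstract glueing $M_j\cup_{S_1}Y_1\cup\cdots\cup_{S_m}Y_m$. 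This is not routine bookkeeping: the paper proves it (Lemma \ref{lem:geometrically_finite}) via Poincar\'e's polyhedron theorem applied to fundamental domains built from Vorono\"i decompositions relative to an admissible point set, and the argument uses in an essential way the orthogonality of the vertical hyperplanes to the horizontal one to rule out nesting of bounding hyperplanes and unwanted intersections after passing to a deep enough congruence-type subgroup that still contains $\Gamma_j$.

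A second, smaller gap concerns the glueing compatibility and orientability. Choosing the same $q$ for all $j$ only makes the boundary components of the pieces $Q_j$ \emph{commensurable}, not isometric: the stabilisers of the dual hypersurfaces on the two sides of a glueing need not match, so the given isometries $\partial P_j\to\partial P_l$ do not automatically extend. The paper fixes this with a second separability step, replacing each vertical manifold in the abstract glueing by a common finite cover of the two relevant boundary components (again keeping $\Gamma_j$ inside the new subgroup so that $M_j$, and not a cover, lifts). Likewise, for the orientable case one must be able to prescribe the orientation class of each extended glueing map while leaving the maps on $\partial P_j$ untouched; the paper achieves this by producing, via the reflection in the horizontal hyperplane and the intersection $\Delta\cap g\Delta g^{-1}$, an orientation-reversing involution of a finite cover of each vertical hypersurface fixing $S$ pointwise. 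Your suggestion to ``choose the side of the hyperplane'' or avoid using the reflection does not address this, since the constraint is on extending fixed $n$-dimensional glueing maps, not on how $M_j$ sits in $N_j$.
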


Theorem \ref{thm:main} is an extension of the recent result by Reid and two of the authors \cite{KRS}, where $s = 1$ and $M = M_1 = P_1$. 

Under the hypothesis above, we say that $M$ admits a decomposition into arithmetic pieces. If such a decomposition has more than one piece, the manifold $M$ is usually non-arithmetic. Indeed, Theorem \ref{thm:main} applies to many of the Gromov--Piatetski-Shapiro non-arithmetic manifolds \cite{GPS} (several explicit 2- and 3- dimensional examples can be constructed, c.f. \cite{Riolo-Slavich-na} for a 4-dimensional one)
and their generalisations \cite{GL,M2,R,Vinberg-na}, as well as to the ones introduced by Agol \cite{Agol} and Belolipetsky--Thomson \cite{BT} (c.f. also \cite{M1}).

In the latter case $M$ is always ``quasi-arithmetic'' (c.f. \cite{M1, T, V} for this notion), in contrast to the former case \cite{T}. In both cases, there are infinitely many commensurability classes of such manifolds \cite{R,T}, and thus we have:

\begin{cor}\label{cor:main}
There are infinitely many pairwise incommensurable non-arith\-me\-tic hyperbolic manifolds of any dimension $n\geq2$ that
embed geodesically. They can be chosen to be closed or cusped, quasi-arithmetic or not, in any combination.
\end{cor}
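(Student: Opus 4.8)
The plan is to produce, for every $n\geq 2$ and for each of the four requested types (closed or cusped, quasi-arithmetic or not), an infinite family of pairwise incommensurable hyperbolic $n$-manifolds each admitting a decomposition into arithmetic pieces in the sense preceding Theorem~\ref{thm:main}, and then to invoke that theorem. In particular the geodesic embedding itself is not the difficulty: once a manifold $M$ has been assembled from arithmetic pieces $P_1,\dots,P_s$ of arithmetic manifolds of simplest type with $\Gamma_j\subseteq\Or(f_j,k)$, Theorem~\ref{thm:main} produces a geodesic embedding, orientable if $M$ is (and if $M$ is non-orientable one may instead embed its orientation double cover, which is still glued from arithmetic pieces). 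So the work is the combinatorial and number-theoretic bookkeeping needed to realise all four combinations with infinitely many commensurability classes in each.

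First I would recall the two available glueing schemes. The Gromov--Piatetski-Shapiro \emph{interbreeding} of \cite{GPS}: fix a totally real field $k$, take two arithmetic manifolds $M_1=\matH^n/\Gamma_1$, $M_2=\matH^n/\Gamma_2$ of simplest type defined by quadratic forms $f_1,f_2$ over $k$ that are \emph{not} similar over $k$ (so that $M_1,M_2$ are incommensurable) but that share an isometric totally geodesic hypersurface, cut each $M_j$ along it, and glue a piece $P_1$ of $M_1$ to a piece $P_2$ of $M_2$; the resulting $M$ is non-arithmetic. The Agol--Belolipetsky--Thomson \emph{inbreeding} \cite{Agol,BT} (see also \cite{M1}) is the analogous operation carried out with pieces from a single commensurability class but reglued so that $M$ is non-arithmetic yet remains quasi-arithmetic. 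In both schemes the pieces are pieces of arithmetic manifolds of simplest type over $k$, and after replacing each $\Gamma_j$ by a commensurable representative — which alters neither the piece away from the glueing locus nor the embedding conclusion — one may assume $\Gamma_j\subseteq\Or(f_j,k)$, since an arithmetic lattice of simplest type is commensurable with $\Or(f_j,\calO_k)\subseteq\Or(f_j,k)$. Hence Theorem~\ref{thm:main} applies to every manifold produced by either scheme, and each such $M$ embeds geodesically.

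Next I would encode the two remaining dichotomies as conditions on the input data. Closed versus cusped is governed by isotropy of the defining forms: choosing the $f_j$ to be $k$-anisotropic yields closed $M$ (for $n\geq 4$ this forces $k\neq\matQ$, e.g. $k$ a suitable real quadratic field, while over $\matQ$ anisotropic forms are available in the low dimensions $n=2,3$), whereas choosing the $f_j$ to be $k$-isotropic (for instance over $\matQ$, automatically when $n\geq 4$) yields cusped $M$; the common hypersurface is then kept compact or cusped accordingly. Quasi-arithmetic versus not is precisely the distinction between the two schemes: the inbred manifolds of \cite{Agol,BT} are quasi-arithmetic by \cite{M1}, while the interbred manifolds of \cite{GPS} built from forms $f_1,f_2$ that are not similar over $k$ fail to be quasi-arithmetic by \cite{T}. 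Combining the two independent choices realises all four regimes in every dimension $n\geq 2$ (with explicit low-dimensional instances in dimensions $2$ and $3$, and the four-dimensional example of \cite{Riolo-Slavich-na}).

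It remains to exhibit infinitely many commensurability classes inside each of the four families, which is the one place where I expect a genuine obstacle. For the non-quasi-arithmetic (GPS) families this is supplied by \cite{R}, and for the quasi-arithmetic (Agol--Belolipetsky--Thomson) families by \cite{T}. Concretely, one fixes the shared hypersurface — hence one of the two forms up to scaling — and lets the other range over an infinite supply of pairwise non-similar forms over $k$ defining incommensurable manifolds that still contain that hypersurface, and then checks, via a commensurability invariant detecting the arithmetic constituents of the glued manifold as in \cite{R,T}, that the resulting $M$ fall into infinitely many commensurability classes. Since this can be done separately within each of the four regimes, applying Theorem~\ref{thm:main} to the manifolds so obtained yields the corollary.
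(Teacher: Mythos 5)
Your proposal is correct and follows essentially the same route as the paper, which deduces the corollary directly from Theorem \ref{thm:main} applied to the Gromov--Piatetski-Shapiro and Agol--Belolipetsky--Thomson constructions, with quasi-arithmeticity or its failure coming from \cite{M1,T} and the infinitude of commensurability classes from \cite{R,T}. Your treatment merely spells out the bookkeeping (choice of forms for closed versus cusped, arranging $\Gamma_j\subset\Or(f_j,k)$) that the paper leaves implicit.
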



A non-trivial property for manifolds which embed geodesically is to bound geometrically. A complete (orientable) hyperbolic manifold $M$ of finite volume \emph{bounds geometrically} if it is isometric to $\partial W$, for a complete (orientable) hyperbolic manifold $W$ of finite volume with totally geodesic boundary. If $M$ bounds geometrically a manifold $W$, it clearly embeds geodesically in the double of $W$.

Despite the fact that 
to bound geometrically is a very strong requirement \cite{KRR, LR1}, for any $n\geq2$ there is a constant $c>0$ such that the number $\beta_n(v)$ of $n$-dimensional geometric boundaries of volume $\leq v$ is at least $v^{cv}$, for $v$ sufficiently big \cite{CK}. For $n\geq4$, the number $\mu_n(v)$ of all hyperbolic $n$-manifolds with volume $\leq v$ satisfies $v^{cv}\leq\mu_n(v)\leq v^{dv}$ for $v$ large enough \cite{BGLM}, so that $\beta_n$ and $\mu_n$ have the same the growth rate
(while usually $\mu_n(v) = \infty$ for $n = 2$ or $3$). The geometric boundaries constructed in \cite{CK} are arithmetic. The same lower bound is provided for the number of non-arithmetic $3$-manifolds that bound geometrically, and for the number of $4$-manifolds with connected geodesic boundary by virtue of an explicit construction \cite{KR}.

Theorem \ref{thm:main} allows us to improve significantly such considerations on geometrically bounding manifolds. Indeed, let $C_n(v)$ denote the number of commensurability classes of hyperbolic $n$-manifolds admitting a representative of volume $\leq v$, and $B_n(v)$ be the number of such classes represented by a geometric boundary of volume $\leq v$. Of course $B_n(v) \leq C_n(v) \leq \mu_n(v)$. As shown by Gelander and Levit \cite{GL}, for all $n\geq2$ we have $C_n(v) \geq v^{cv}$, for $v$ large enough. Following their arguments and applying Theorem \ref{thm:main}, we prove:

\begin{thm}\label{thm:counting}
For every $n \geq 2$, there exists $c>0$ such that $B_n(v) \geq v^{cv}$ for $v$ sufficiently large.
\end{thm}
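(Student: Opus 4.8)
The plan is to follow the Gelander--Levit argument for the lower bound on $C_n(v)$, but to arrange that each manifold produced in their counting scheme is a geometric boundary, using Theorem~\ref{thm:main} as the new ingredient. Recall how \cite{GL} obtains $C_n(v) \geq v^{cv}$: starting from a fixed arithmetic hyperbolic $n$-manifold (or a suitable arithmetic orbifold) one performs a ``Gromov--Piatetski-Shapiro''-style interbreeding or, more precisely, builds a large family of manifolds by gluing together, in many combinatorially distinct ways, a bounded collection of arithmetic pieces cut along totally geodesic hypersurfaces. The number of gluing patterns that can be realised with total volume $\leq v$ grows like $v^{cv}$, and an invariant (e.g. the commensurability class, distinguished via the adjoint trace field / the set of lengths of closed geodesics, or via the structure of the maximal totally geodesic hypersurfaces) shows that $\asymp v^{cv}$ of these lie in pairwise distinct commensurability classes.

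The key observation is that every manifold produced this way is, by construction, obtained by gluing boundary components of pieces $P_1,\dots,P_s$ of arithmetic manifolds $M_j = \matH^n/\Gamma_j$ of simplest type, with all $M_j$ defined over a common totally real field $k$ (one may take the $M_j$ to be finite covers of a single fixed arithmetic orbifold, all commensurable, hence all defined over the same $k$ and with the same quadratic form up to similarity). Thus the hypothesis of Theorem~\ref{thm:main} is met: one only needs $\Gamma_j \subset \Or(f_j,k)$, which holds after passing to a finite-index subgroup (a ``neat'' or level-structure subgroup) of each arithmetic lattice, and this can be done uniformly while only multiplying volumes by a bounded factor, so the $v^{cv}$ growth is unaffected. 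Hence each such $M$ embeds geodesically, and — since one can take everything orientable — the second half of Theorem~\ref{thm:main} gives an orientable $(n+1)$-manifold $X$ containing $M$ as a two-sided totally geodesic hypersurface.

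To upgrade ``embeds geodesically'' to ``bounds geometrically'' one argues as follows: if $M \subset X$ is a two-sided separating totally geodesic hypersurface in the orientable $X$, then cutting $X$ along $M$ yields a hyperbolic manifold with totally geodesic boundary one of whose boundary components is $M$; if $M$ is non-separating, or the complementary piece is connected, one instead takes the component of $X \setminus M$ adjacent to $M$, or passes to a suitable double cover of $X$ in which the preimage of $M$ becomes separating and has a component isometric to $M$ — this is the standard trick (already used in \cite{KRS,KR}) to convert a geodesic embedding into a geometric bounding. Either way $M = \partial W$ for some finite-volume hyperbolic $W$ with totally geodesic boundary, so $M$ bounds geometrically. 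Note that the volume of $W$ is controlled (bounded in terms of $\Vol(M)$ and the fixed ambient data), so the count of geometric boundaries of volume $\leq v$ still grows like $v^{cv}$; in any case $B_n$ counts commensurability classes with \emph{a} representative that is a geometric boundary, and the volume of that representative $M$ is $\leq v$ by construction.

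The main obstacle is bookkeeping rather than a new idea: one must verify that the Gelander--Levit family can be set up so that (a) all pieces are genuinely pieces of arithmetic manifolds of simplest type over a single totally real $k$ with lattices inside $\Or(f_j,k)$ — in particular that the hypersurfaces along which one cuts are themselves arithmetic, defined over $k$, which is automatic for the totally geodesic hypersurfaces dual to vectors in $k^{n+1}$ of the ambient form; (b) the $v^{cv}$ lower bound on the number of commensurability classes survives the passage to the finite-index ``neat'' subgroups needed to apply Theorem~\ref{thm:main}, which it does because the index is bounded independently of $v$; and (c) the commensurability invariant used in \cite{GL} to separate the classes is not destroyed by the cut-and-paste and by the bounded-index passage. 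Granting the cited count of \cite{GL} and Theorem~\ref{thm:main}, the proof is then essentially the concatenation of these two inputs with the standard ``embedding $\Rightarrow$ bounding'' doubling argument.
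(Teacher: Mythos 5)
Your overall strategy (run the Gelander--Levit gluing construction with pieces satisfying the hypotheses of Theorem~\ref{thm:main}, then upgrade the resulting geodesic embeddings to geometric boundings) is the same as the paper's, but the step that converts ``embeds geodesically'' into ``bounds geometrically'' contains a genuine gap. If $M\subset X$ is two-sided but \emph{non-separating}, cutting $X$ along $M$ produces a connected $W'$ with $\partial W'\cong M\sqcup M$, so $M$ is not isometric to $\partial W'$; and the ``standard trick'' you invoke --- passing to a double cover of $X$ in which a preimage component isometric to $M$ becomes separating --- does not exist in general. In the double cover dual to $[M]$ the preimage is two copies of $M$, each again non-separating, and no cover of the ambient manifold can help in general: if geodesic embedding (plus covers of $X$) implied that $M$ itself bounds, then \emph{every} manifold satisfying Theorem~\ref{thm:embedding-arithmetic} would bound geometrically, contradicting the Seifert--Weber example recalled in Section~\ref{sec:rmk}, which embeds geodesically but has non-integral $\eta$-invariant and hence does not bound \cite{LR1}. (The tricks in \cite{LR2,KR} that you cite rely on extra structure --- a reflection symmetry of the ambient manifold fixing $M$, or colourings of right-angled polytopes --- not on a cover of $X$.)

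The paper closes exactly this gap by building the Gelander--Levit family so that each glued manifold $M'$ is \emph{non-orientable} (this is the role of the special piece $P_{\rmv}$, obtained by self-identifying two boundary copies of $S$ via an orientation-reversing isometry $\varphi$), and then applying Theorem~\ref{thm:main} to the orientation double cover $M$ of $M'$. That $M$ carries a fixed-point-free, orientation-reversing isometric involution $\iota$ (the deck transformation), and if $M$ fails to separate the orientable $X$, one cuts $X$ along $M$ and glues one of the two resulting boundary copies to itself via $\iota$, producing $W$ with $\partial W\cong M$. Without such an involution on $M$ itself your argument does not go through. A secondary inaccuracy: taking all the $M_j$ commensurable (``finite covers of a single arithmetic orbifold with the same form up to similarity'') would undermine the mechanism by which \cite{GL} distinguishes commensurability classes; the construction needs several \emph{inequivalent} admissible forms over the common field $k$ (the forms $f^{\rma^\pm},f^{\rmb^\pm},f^{\rmu},f^{\rmv}$), and only the common $k$ is required for Theorem~\ref{thm:main}.
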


Thus, there is plenty of geometric boundaries in any dimension, and for $n\geq4$ the growth rate of their commensurability classes is roughly the same of that of all hyperbolic $n$-manifolds. An analogous statement holds when restricting the count to either cusped or closed manifolds. In the latter case, it holds for with the extra requirement that each $M$ geometrically bounds a compact $W$. 

The manifolds that we build in order to prove Theorem \ref{thm:counting} are non-arithmetic. Indeed, there is an upper bound of the form $v^{b(\log v)^{\epsilon}}$ (and $v^b$ in the compact case) for the growth rate of commensurabilty classes of arithmetic hyperbolic manifolds of any dimension $n\geq2$ \cite{Bel, BGLS}. In other words, ``most'' hyperbolic manifolds are non-arithmetic.

\subsection*{On the proof}

The proof of Theorem \ref{thm:main} can be rougly resumed as follows: we embed the pieces into which the $n$-manifold $M$ decomposes into $(n+1)$-dimensional pieces in such a way that the latter can be glued back together.

More precisely, let $S_1, \ldots, S_{m_j}$ be the hypersurfaces of $M_j$ that produce the piece $P_j$. We show that each $M_j$ embeds geodesically in an $(n+1)$-manifold $X_j$ in such a way that $M_j$ intersects in $X_j$ orthogonally a finite collection of pairwise disjoint embedded totally geodesic hypersurfaces $Y_1, \dots, Y_{m_j}$ of $X_j$ with $Y_i \cap M_j = S_i$ (c.f. Figure \ref{fig:proof}, right).

By cutting $X_j$ open along $Y_1,\dots, Y_{m_j}$, we obtain an $(n+1)$-dimensional piece $Q_j$ in which $P_j$ is totally geodesically embedded, and intersects $\partial Q_j$ orthogonally with $P_j \cap \partial Q_j = \partial P_j$.

By carefully performing this construction for each $j = 1, \ldots, s$, we can ensure that the isometries between the boundary components of the original pieces $P_1, \dots, P_s$ extend to isometries between the boundary components of $Q_1, \dots, Q_s$. By glueing these pieces together according to the respective isometries, we produce a hyperbolic $(n+1)$-manifold $X$ into which $M$ embeds geodesically. In both the present paper and in \cite{KRS}, the main difficulties arise when proving that the manifolds considered embed without the need to pass to a finite index cover.

The two main tools which we employ are the embedding theorem from \cite{KRS} (c.f. Theorem \ref{thm:embedding-arithmetic}) for arithmetic hyperbolic manifolds of simplest type, together with the crucial fact that arithmetic hyperbolic lattices of simplest type are separable on geometrically finite subgroups (c.f. Theorem \ref{thm:GFERF}), as follows from the work \cite{BHW} by Bergeron, Haglund and Wise. We point out that the separability Theorem \ref{thm:GFERF} is used in \cite{KRS} to prove the embedding Theorem \ref{thm:embedding-arithmetic}.

In order to use the results of \cite{BHW}, we need to show that the fundamental group of the ``abstract glueing''
$ M_j\,\cup_{S_1}\,Y_1\,\cup_{S_2}\,\ldots\,\cup_{S_{m_j}}\,Y_{m_j} $,
contains a geometrically finite subgroup in which $\pi_1(M_j)$ injects, once we pass to finite-index subgroups of some $\pi_1(Y_k)$, $k = 1, \ldots, m_j$. We provide a geometric proof of this fact, which requires a more careful argument than the one given in \cite[Lemma 7.1]{BHW}.

The counting of geometric boundaries in Theorem \ref{thm:counting} basically follows by applying Theorem \ref{thm:main} to the arguments of Gelander and Levit: we glue pieces as prescribed by some decorated graphs, whose number grows super-exponentially in function of the bound on the number of vertices. The resulting manifolds embed geodesically by Theorem \ref{thm:main}. To conclude, we need to show that each of these manifolds $M$ can be chosen so to admit a fixed-point-free, orientation-reversing, isometric involution $\iota$. Indeed, if $M$ embeds geodesically in an orientable $X$, a priori we cannot ensure that $M$ disconnects $X$ (so that $M$ bounds geometrically). If it is not the case, by cutting $X$ along $M$ and quotienting out one of the two resulting boundary components by $\iota$, we have that $M$ bounds geometrically.

\subsection*{Structure of the paper}

In Section \ref{sec:preliminaires} we briefly review arithmetic manifolds of simplest type and state  Theorems \ref{thm:GFERF} and \ref{thm:embedding-arithmetic}. In Section \ref{sec:embedding-boundary} we prove Proposition \ref{prop:main}, which is the key ingredient for the proof of Theorems \ref{thm:main} and \ref{thm:counting}. The latter are proved in Section \ref{sec:proof-main}. We conclude the paper by Section \ref{sec:rmk}, with some comments about manifolds that do not embed geodesically.

\subsection*{Acknowledgements}

The authors are grateful to Jean Raimbault (Institut de Math\'ematiques de Toulouse) for stimulating discussions on the topic. A.K. and L.S. enjoyed the hospitality and atmosphere of the Oberwolfach Mini-Workshop ``Reflection Groups in Negative Curvature'' (1915b) in April 2019, during which some parts of this paper were discussed. L.S. would like to thank the Department of Mathematics at the University of Neuch\^atel for hospitality during his stay in March 2019.

\section{Preliminaries}\label{sec:preliminaires}

With a slight abuse of notation, let $J_n$ denote both the quadratic form $-x_0^2+x_1^2+\cdots +x_n^2$ over $\mathbb{R}^{n+1}$, as well as the associated diagonal matrix. We identify the hyperbolic space ${\matH}^n$ with the upper half-sheet $\{x\in {\matR}^{n+1} : J_n(x) = -1, x_0>0\}$ of the hyperboloid $\{x\in {\matR}^{n+1} : J_n(x) = -1\}$ and, by letting
$\Or(n,1)=\{A\in\GL(n+1,\matR) : A^tJ_nA=J_n\}$,
also identify $\Isom({\matH}^n)$ with the index two subgroup $\Or^{+}(n,1) < \Or(n,1)$ preserving the upper half-sheet of the hyperboloid.

\subsection{Arithmetic manifolds of simplest type} \label{sec:simplest_type}

Let $k$ be a totally real algebraic number field, together with a fixed embedding into $\mathbb{R}$ which we refer to as the identity embedding. Let $R_k$ denote the ring of integers of $k$. Let $V$ be an $(n+1)$-dimensional vector space over $k$ (by choosing a basis, we can assume $V=k^{n+1}$), equipped with a non-degenerate quadratic form $f$ defined over $k$.

We say that the form $f$ is \emph{admissible} if it has signature $(n,1)$ at the identity embedding, and signature $(n+1,0)$ at all remaining Galois embeddings of $k$ into $\mathbb{R}$. Under the assumptions above, the form $f$ is equivalent over $\matR$ to the quadratic form $J_n$, and for any non-identity Galois embedding $\sigma\colon k\rightarrow \matR$, the quadratic form $f^\sigma$ (obtained by applying $\sigma$ to each coefficient of $f$) is equivalent over $\matR$  to $x_0^2+\cdots +x_n^2$. An \emph{arithmetic subgroup} of $\Or(f,\matR)$ is a subgroup $\Gamma<\Or(f,\matR)$ commensurable (in the wide sense) with $\Or(f, R_k)$.

In order to define arithmetic subgroups of $\Or^{+}(n,1)$ we notice that, given an admissible quadratic form $f$ over $k$ of signature $(n,1)$, there exists $T\in \GL(n+1,\matR)$ such that $T^{-1}\Or(f,\matR)T = \Or(n,1)$. A subgroup $\Gamma < \Or^+(n,1)$ is called {\em arithmetic of simplest type} if $\Gamma$ is commensurable with the image in $\Or(n,1)$ of an arithmetic subgroup of $\Or(f,\matR)$ under the conjugation map above. A hyperbolic manifold $M={\matH}^n/\Gamma$ is called {\em arithmetic of simplest type} if $\Gamma$ is so.

\subsection{Immersed hypersurfaces}

Let us fix an admissible quadratic form $f$ defined over $k$. By interpreting $f$ as a form of signature $(n,1)$ on $\mathbb{R}^{n+1}=k^{n+1}\otimes \mathbb{R}$, we identify the hyperbolic space $\mathbb{H}^n$ with the appropriate half-sheet of the hyperboloid $\{x\in {\matR}^{n+1} : f(x) = -1\}$, and the group of isometries $\Isom(\mathbb{H}^n)$ with $\Or^+(f,\matR)$. A vector $v$ in $\matR^{n+1}$ is said to be a \emph{$k$-vector} if it lies in $k^{n+1}$. Given a $k$-vector $v$, we say that $v$ is \emph{space-like} if $f(v)>0$. Given a space-like $k$-vector $v$, let us denote by $v^\perp$ the subspace $\{w \in \matR^{n+1}:b_f(w,v)=0\}$, where $b_f$ denotes the symmetric bilinear form associated with $f$. Let $H_v$ denote the intersection $v^\perp \cap \matH^n$, which is a totally geodesic subspace of $\matH^n$, isometric to $\matH^{n-1}$. 

If $\Gamma$ is an arithmetic subgroup of $\Or(f,\matR)$, it is easy to see that the stabiliser of $H_v$ in $\Gamma$ is itself an arithmetic group of simplest type acting on $H_v$. We simply restrict the form $f$ to $v^{\perp}$ and notice that it is still admissible and defined over the same field $k$. We call totally geodesic subspaces of $\matH^n$ of the form $H_v$, where $v$ is a space-like $k$-vector, \emph{$\Gamma$-hyperplanes}. If the group $\Gamma$ is torsion-free, so that $M=\matH^n/\Gamma$ is a manifold, the image of $H_v$ in $M$ will be a totally geodesic, properly \textit{immersed} hypersurface with fundamental group isomorphic to $\mathrm{Stab}_{\Gamma}(H_v)$. Vice versa, every properly immersed totally geodesic hypersurface of $M$ can be constructed in this way (c.f. \cite[Corollary 5.11]{BBKS}).

\subsection{Embedding and separability} \label{sec:emb_sep}

In this section, we introduce two results about arithmetic manifolds of simplest type that will be put to essential use later on. The first one, due to Bergeron, Haglund and Wise \cite{BHW}, concerns separability of geometrically finite subgroups in arithmetic lattices of simplest type.

Let $\Gamma$ be a discrete subgroup of $\Isom(\matH^n)$. A finitely generated subgroup $G < \Gamma$ is \emph{separable} in $\Gamma$ if for every $g \in \Gamma \smallsetminus G$ there exists a finite-index subgroup $\Gamma' < \Gamma$ such that $G < \Gamma'$ and $g \not\in \Gamma'$. The group $\Gamma$ is \emph{geometrically finite extended residually finite} (``GFERF'' for short) if any geometrically finite subgroup $G<\Gamma$ is separable in $\Gamma$. 

\begin{thm}\label{thm:GFERF}
Hyperbolic arithmetic lattices of simplest type are GFERF.
\end{thm}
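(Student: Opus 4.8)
The plan is to reduce the statement to the case of a cocompact or cofinite reflection lattice that is ``stably special'' in the sense of Haglund--Wise, and then invoke their separability machinery for special cube complexes. First I would recall the two-step structure of the argument in \cite{BHW}. Step one: since $\Gamma$ is an arithmetic lattice of simplest type, the hyperplane stabilisers $\mathrm{Stab}_\Gamma(H_v)$ for $k$-rational space-like vectors $v$ form a rich family of codimension-one subgroups; in fact one shows that $\Gamma$ contains a finite-index subgroup $\Gamma_0$ admitting a finite collection of such hyperplanes whose stabilisers are separable and which, after passing to a further finite-index subgroup, ``cut up'' $\matH^n$ into a locally finite arrangement with compact (or finite-volume) chambers. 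Concretely, one uses the existence of enough $k$-rational hyperplanes (Galois conjugation keeps the forms on the orthogonal complements positive definite at all non-identity places, so each $v^\perp$-form is again admissible over the same $k$) together with Vinberg-type arguments to produce an arrangement whose dual cube complex is nonpositively curved.

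The second and main step is to show that the action of a finite-index $\Gamma_0 < \Gamma$ on the CAT(0) cube complex $C$ dual to this hyperplane arrangement is \emph{virtually special} in the sense of Haglund--Wise. Here I would cite that the hyperplanes of the arrangement are ``well-separated'' — any two either are disjoint, cross, or osculate in a controlled way — which is exactly what is needed to rule out self-intersections, one-sided hyperplanes, and inter-osculations in the quotient cube complex $C/\Gamma_0$ after passing to a further finite-index subgroup. This is where arithmeticity of simplest type is essential: the commensurator is large enough, and there are enough commensurable hyperplanes, to separate the finitely many ``pathologies'' by a finite cover. Once $\Gamma_0 \backslash C$ is special (a finite special cube complex), its fundamental group $\Gamma_0$ embeds in a right-angled Artin group $A$, and every quasiconvex subgroup of $\Gamma_0$ is separable because quasiconvex subgroups of $A$ are separable (Haglund--Wise). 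Since geometrically finite subgroups of $\Gamma_0$ are quasiconvex with respect to the word metric (they are quasiconvex in $\matH^n$, hence quasiconvex in $\Gamma_0$ by a Milnor--\v{S}varc/convex-core argument), we conclude $\Gamma_0$ is GFERF.

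To finish, I would transfer GFERF-ness from the finite-index subgroup $\Gamma_0$ up to $\Gamma$ itself. This is the standard observation that separability of geometrically finite subgroups passes to finite-index overgroups: if $G < \Gamma$ is geometrically finite, then $G \cap \Gamma_0$ has finite index in $G$ and is geometrically finite in $\Gamma_0$, hence separable there; separability of a finite-index subgroup together with separability of the finitely many cosets of $G \cap \Gamma_0$ in $G$ (each a finite union of cosets of a separable subgroup, using that $\Gamma$ — being arithmetic, hence residually finite — has separable finite-index subgroups) yields separability of $G$ in $\Gamma$. One also notes GFERF is inherited by commensurable groups, so there is no loss in replacing the given $\Gamma$ by the full $\Or(f,R_k)$-type lattice at the start.

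The hard part is Step two: engineering the finite cover in which the dual cube complex becomes special. The delicate point is the \emph{no inter-osculation} condition for hyperplanes in the quotient, which does not follow from general nonsense about CAT(0) cube complexes but genuinely uses the double-coset separability available in arithmetic groups of simplest type (it is here that one must know the hyperplane stabilisers are themselves separable, so that osculating pairs of hyperplanes can be pulled apart by a finite cover). Everything else — constructing the arrangement, checking the CAT(0) property of the dual complex, and the descent/ascent between $\Gamma$ and finite-index subgroups — is, by comparison, routine; in fact the entire argument is precisely \cite[Theorem 1.4 and Section 7]{BHW}, and for the purposes of this paper we simply quote it.
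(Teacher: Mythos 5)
The paper does not prove this statement at all: Theorem \ref{thm:GFERF} is simply quoted from Bergeron--Haglund--Wise \cite{BHW}, which is exactly what you do in your closing paragraph. Your sketch of their cubulation-and-specialness argument is a fair summary (with the minor caveat that in the cusped case geometrically finite subgroups are only \emph{relatively} quasiconvex in the word metric, a point \cite{BHW} handles), so your approach coincides with the paper's.
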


Since all the groups we will deal with are finitely generated, we define a geometrically finite group as one such that $\mathrm{Vol}(N_{\epsilon}(C(\Gamma)))<\infty$, where $C(\Gamma)$ is the convex core of $\mathbb{H}^n/\Gamma$. By \cite[p. 289]{Bow}, this condition is equivalent to the existence of a (possibly non-connected) finite-sided fundamental polyhedron for the action of $\Gamma$ on $\mathbb{H}^{n}$.

Now, let $M=\matH^n/\Gamma$ be an arithmetic manifold of simplest type, and let $H$ be a $\Gamma$-hyperplane. As mentioned previously, there exists a $\pi_1$-injective immersion of the manifold $S=H/\mathrm{Stab}_\Gamma(H)$ into $M$. The stabiliser of $H$ in $\Gamma$ is easily seen to be geometrically finite subgroup of $\Gamma$, and is therefore separable in $\Gamma$ by Theorem \ref{thm:GFERF}. This fact was already well known without need of Theorem \ref{thm:GFERF}; c.f. \cite{B,L}.  As a consequence, there exists a finite-index subgroup $\Gamma'<\Gamma$ such that $\mathrm{Stab}_{\Gamma}(H)<\Gamma'$, and such that $S$ lifts to a totally geodesic \emph{embedded} hypersurface in $M'=\mathbb{H}^n/\Gamma'$.

The construction above provides an abundance of examples of hyperbolic manifolds of simplest type that embed geodesically. This naturally suggests to go the opposite way: we start with an arithmetic $n$-manifold of simplest type, and we want to realise it as an embedded totally geodesic hypersurface in an $(n+1)$-arithmetic manifold of simplest type. The second result shows that this can often be done \cite{KRS}.

\begin{thm}\label{thm:embedding-arithmetic}
Let $M=\mathbb{H}^n/\Gamma$ be an arithmetic manifold of simplest type whose form $f$ is defined over a field $k$. If $\Gamma<\mathrm{O}(f, k)$ then, for any positive $q \in \mathbb{Q}$, the manifold $M$ embeds geodesically in an arithmetic manifold $X=\mathbb{H}^{n+1}/\Lambda$ of simplest type with form $f\oplus\langle q\rangle$ and $\Lambda<\mathrm{O}(f\oplus\langle q\rangle, k)$. If $M$ is orientable, $X$ can be chosen to be orientable. 
\end{thm}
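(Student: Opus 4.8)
\smallskip
\noindent\textbf{Proof proposal.} The plan is to realise $M$ as the totally geodesic hypersurface dual to a spacelike $k$-vector inside an arithmetic $(n+1)$-manifold of simplest type with form exactly $g:=f\oplus\langle q\rangle$, and then to upgrade ``immersed'' to ``embedded'' --- while simultaneously pinning the hypersurface group down to \emph{exactly} $\Gamma$ --- using the separability Theorem \ref{thm:GFERF}.

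\emph{Step 1 (a $\Gamma$-invariant lattice).} Although we only assume $\Gamma<\Or(f,k)$, the group $\Gamma$ in fact preserves an $R_k$-lattice of $k^{n+1}$: since $\Gamma$ is commensurable with $\Or(f,R_k)$, the subgroup $\Gamma_0:=\Gamma\cap\Or(f,R_k)$ has finite index $d$ in $\Gamma$, and if $\gamma_1,\dots,\gamma_d$ are coset representatives then $L:=\sum_{i=1}^{d}\gamma_i\,R_k^{n+1}$ is a finitely generated $R_k$-submodule of $k^{n+1}$ spanning $k^{n+1}$, hence an $R_k$-lattice, and it is $\Gamma$-invariant because $\Gamma_0$ fixes $R_k^{n+1}$. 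Write $k^{n+2}=k^{n+1}\oplus k\,e$ with $g(e)=q$, set $\widetilde L:=L\oplus R_k\,e$, and let $\Lambda_0:=\Or(\widetilde L,g)$. Since $g$ is admissible of signature $(n+1,1)$ and $\Lambda_0$ is commensurable with $\Or(g,R_k)$, the group $\Lambda_0$ is an arithmetic lattice of simplest type with form $g$; writing its elements in an $R_k$-basis of $\widetilde L$ shows $\Lambda_0<\Or(g,k)$.

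\emph{Step 2 (the hyperplane and its stabiliser).} The vector $e$ is spacelike and $e^\perp=k^{n+1}$ carries the form $f$, so $H_e:=e^\perp\cap\matH^{n+1}$ is a totally geodesic copy of $\matH^n$; its pointwise stabiliser in $\Or(g,\matR)$ is generated by the reflection $\rho\colon e\mapsto-e$. An element of $\Lambda_0$ preserving $H_e$ setwise must send $e\mapsto\pm e$ (as $g(\lambda e)=g(e)$ forces $\lambda=\pm1$) and restrict on $e^\perp$ to an element of $\Or(L,f)$, and conversely every such map lies in $\Lambda_0$; hence $G:=\Stab_{\Lambda_0}(H_e)$ surjects onto $\Or(L,f)$ with kernel $\langle\rho\rangle$. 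By Step 1 we have $\Gamma\leq\Or(L,f)$, and since both are lattices this inclusion has finite index; let $H\leq G$ be the preimage of $\Gamma$, so that $H_e/H=\matH^n/\Gamma=M$. Both $G$ and $H$ are geometrically finite (a hyperplane stabiliser, resp.\ a lattice acting on $H_e$ extended by $\rho$), hence separable in $\Lambda_0$ by Theorem \ref{thm:GFERF}.

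\emph{Step 3 (embedding, exactly).} There are finitely many nontrivial cosets of $H$ in $G$, and --- by the geometric finiteness of $\Lambda_0$ and of $G$, as in \cite{BHW,KRS} --- only finitely many $H$-double cosets $H\lambda H$ with $\lambda\in\Lambda_0\setminus G$ and $\lambda H_e\cap H_e\neq\emptyset$. Using Theorem \ref{thm:GFERF} to separate $H$ in $\Lambda_0$ from a representative of each of these finitely many classes and intersecting the resulting finite-index subgroups, we would obtain a finite-index $\Lambda\leq\Lambda_0$ with $H\leq\Lambda$, with $\Lambda\cap G=H$, and with $\lambda H_e\cap H_e\neq\emptyset\Rightarrow\lambda\in H$ for every $\lambda\in\Lambda$. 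Then $X:=\matH^{n+1}/\Lambda$ is arithmetic of simplest type with form $g=f\oplus\langle q\rangle$, $\Lambda<\Or(g,k)$, and $H_e$ descends to an \emph{embedded} totally geodesic hypersurface isometric to $H_e/\Stab_\Lambda(H_e)=H_e/H=M$. If $M$ is orientable then $\Gamma$ acts orientation-preservingly; replacing $\Lambda$ by its index-two orientation-preserving subgroup keeps the stabiliser of $H_e$ acting on $H_e$ as $\Gamma$ and renders $X$ orientable.

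\emph{Main obstacle.} I expect Step 3 to be the crux. Immersing a copy of $\matH^n$ dual to $e$ is immediate, but forcing the image to be \emph{embedded} and its hypersurface group to be \emph{exactly} $\Gamma$ --- rather than some commensurable overgroup, which would only display a quotient of $M$ --- is precisely the point where one must avoid passing to a cover of $M$; it relies on Theorem \ref{thm:GFERF} together with the finiteness of the set of translates of $H_e$ that need to be separated, which in turn rests on the geometric finiteness of hyperplane stabilisers in arithmetic lattices of simplest type.
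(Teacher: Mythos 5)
Your overall strategy coincides with the one the paper attributes to \cite{KRS} for Theorem \ref{thm:embedding-arithmetic}: make $\Gamma$ preserve an $R_k$-lattice, extend $f$ to $g=f\oplus\langle q\rangle$ so that $\Gamma$ sits inside an arithmetic $\Lambda_0<\Or(g,k)$ stabilising the hyperplane $H_e$, and then use the separability Theorem \ref{thm:GFERF} plus a finite coset/double-coset count to pass to a finite-index $\Lambda$ in which the image of $H_e$ is embedded with stabiliser acting on $H_e$ exactly as $\Gamma$. Steps 1 and 2 and the group-theoretic bookkeeping in Step 3 are essentially right (two small caveats: the paper defines arithmeticity via commensurability \emph{in the wide sense}, so the finiteness of $[\Gamma:\Gamma\cap\Or(f,R_k)]$ deserves a short justification, e.g.\ via integrality of characteristic polynomials; and $\widetilde L$ need not be a free $R_k$-module, though $\Lambda_0<\Or(g,k)$ holds regardless).

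There is, however, a genuine gap: nothing in your argument makes $\Lambda$ torsion-free, while the conclusion requires $X=\matH^{n+1}/\Lambda$ to be a hyperbolic \emph{manifold} (and the paper later invokes Theorem \ref{thm:embedding-arithmetic} precisely as supplying a torsion-free $\Lambda<\Or(g,k)$ with $\Gamma<\Lambda$; see the first lines of the proof of Proposition \ref{prop:main}). Worse, your construction forces torsion into $\Lambda$: you take $H\le G=\Stab_{\Lambda_0}(H_e)$ to be the \emph{full} preimage of $\Gamma$, so $H$ contains the reflection $\rho$ in $H_e$, and you then demand $H\le\Lambda$. With $\rho\in\Lambda$ every point of $H_e$ has nontrivial stabiliser, so $\matH^{n+1}/\Lambda$ is an orbifold whose mirror locus is exactly the image of $H_e$; the quotient is metrically $M$, but it is not a totally geodesic submanifold of a finite-volume hyperbolic manifold, so the theorem is not proved. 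Dropping $\rho$ is easy (separate $\Gamma$ from representatives of the nontrivial cosets of $\Gamma$ in $G$, so that $\Stab_\Lambda(H_e)$ acts on $H_e$ as $\Gamma$), but the deeper problem remains: $\Or(\widetilde L,g)$ contains many finite-order elements, and separating $\Gamma$ from finitely many elements gives no control over the infinitely many $\Lambda_0$-conjugates of a torsion element, any of which may survive in $\Lambda$. As written, Step 3 therefore only embeds $M$ totally geodesically into a finite-volume hyperbolic \emph{orbifold}. Producing a torsion-free $\Lambda$ that still contains all of $\Gamma$ — one cannot simply intersect with a torsion-free finite-index subgroup, since that would replace $M$ by a finite cover, exactly the loss the theorem is designed to avoid — is an additional step that the cited proof in \cite{KRS} has to (and does) address, and it is the main missing ingredient in your proposal.
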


The technical point of the statement is that the fundamental group $\Gamma$ of $M$ is required to be contained in the group $\mathrm{O}(f,k)$ of $k$-points of $\mathrm{O}(f, \mathbb{R})$. However, this is not too restrictive. In even dimensions, all hyperbolic arithmetic lattices are of simplest type, and lie in the group of $k$-points of the corresponding orthogonal group (c.f. \cite{Bo} and \cite[Lemma 4.2]{ERT}). In odd dimensions, if $\Gamma<\mathrm{O}(f, \mathbb{R})$ is arithmetic of simplest type, then the subgroup $\Gamma^{(2)}=\langle\gamma^2\,|\,\gamma \in \Gamma\rangle$ has finite index in $\Gamma$, and is contained in the group of $k$-points $\mathrm{O}(f, k)$. Therefore, at worst a finite-index Abelian cover $\mathbb{H}^{n}/\Gamma^{(2)}$ of $\mathbb{H}^{n}/\Gamma$ embeds geodesically.

\section{Embedding relative to hypersurfaces}\label{sec:embedding-boundary}

The goal of this section is to prove the following:

\begin{prop}\label{prop:main}
Let $M=\mathbb{H}^n/\Gamma$ be an arithmetic manifold of simplest type whose form $f$ is defined over $k$, and let $\mathscr S = \{ S_1, \dots, S_m \}$ be a finite collection of pairwise disjoint, properly embedded, totally geodesic hypersurfaces of $M$. 

If $\Gamma<\mathrm{O}(f, k)$, then $M$ embeds geodesically in a hyperbolic $(n+1)$-manifold $X$ containing $m$ disjoint, properly embedded, totally geodesic hypersurfaces $Y_1$, $\dots$, $Y_m$ that intersect $M$ orthogonally, with $Y_i\cap M = S_i$ for all $i=1, \dots, m$. If $M$ is orientable, $X$ can be chosen to be orientable.
\end{prop}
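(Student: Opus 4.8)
The plan is to build $X$ by a two-stage process: first apply the arithmetic embedding Theorem~\ref{thm:embedding-arithmetic} to realise $M$ inside an arithmetic $(n+1)$-manifold of simplest type, and then use GFERF separability (Theorem~\ref{thm:GFERF}) to pass to a finite cover in which the orthogonally-intersecting hypersurfaces $Y_i$ become embedded and disjoint. Concretely, since $\Gamma<\Or(f,k)$, Theorem~\ref{thm:embedding-arithmetic} gives an arithmetic lattice $\Lambda_0<\Or(f\oplus\langle q\rangle, k)$ (any convenient $q\in\matQ_{>0}$) with $\Gamma<\Lambda_0$ and $M=\matH^n/\Gamma$ embedded geodesically as the fixed-point set of the reflection in the last coordinate inside $X_0=\matH^{n+1}/\Lambda_0$. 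Write $\matH^n=H_w\cap\matH^{n+1}$ for the space-like $k$-vector $w$ spanning the $\langle q\rangle$-summand.

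The key geometric input is that each $S_i\subset M$ is of the form $H_{v_i}\cap\matH^n$ for a space-like $k$-vector $v_i\in k^{n+1}$, with $v_i$ orthogonal to $w$ (since $S_i\subset M$). Viewing $v_i$ as a space-like $k$-vector in $k^{n+2}$, the hyperplane $H_{v_i}^{X_0}:=H_{v_i}\cap\matH^{n+1}$ is a $\Lambda_0$-hyperplane that meets $H_w\cap\matH^{n+1}=\matH^n$ precisely in $H_{v_i}\cap\matH^n=S_i$, and does so orthogonally because $b_f(v_i,w)=0$ translates into the two hyperplanes meeting at right angles. So in $X_0$ we already have totally geodesic \emph{immersed} hypersurfaces $Y_i^0$ (images of $H_{v_i}^{X_0}$) intersecting $M$ orthogonally along $S_i$; the remaining work is to make them embedded, mutually disjoint, and disjoint-from-each-other-along-$M$-compatibly, without disturbing the already-embedded $M$. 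For this we invoke separability: $\Stab_{\Lambda_0}(H_{v_i}^{X_0})$ is geometrically finite, as is $\Gamma=\Stab_{\Lambda_0}(\matH^n)$, so by Theorem~\ref{thm:GFERF} there is a finite-index $\Lambda<\Lambda_0$ containing all these stabilisers and separating out the finitely many ``bad'' double-coset representatives that record self-intersections of $Y_i^0$, intersections $Y_i^0\cap Y_j^0$ ($i\neq j$), and any intersection of $Y_i^0$ with $M$ outside $S_i$. Setting $X=\matH^{n+1}/\Lambda$ and $Y_i=$ the component of the image of $H_{v_i}$ through the lift of $S_i$ yields the desired configuration; $M$ still embeds since $\Gamma<\Lambda$. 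Orientability is handled exactly as in Theorem~\ref{thm:embedding-arithmetic}: pass to the finite-index subgroup $\Lambda^+=\Lambda\cap\Or^+$ preserving orientation, noting this does not destroy any embeddedness already achieved.

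I expect the main obstacle to be the separability bookkeeping: one must show that the conditions ``$Y_i$ embedded'', ``$Y_i\cap Y_j=\emptyset$ for $i\neq j$'', ``$Y_i\cap M=S_i$ exactly'', and ``$M$ embedded'' can \emph{simultaneously} be arranged in a single finite cover. The standard device is that each such condition fails only along finitely many double cosets $\Stab(A)\backslash\Lambda_0/\Stab(B)$ of the relevant geometrically finite subgroups (this uses that two distinct $\Lambda_0$-hyperplanes, or a hyperplane and $\matH^n$, cannot be tangent or asymptotic in a way producing infinitely many intersection patterns locally --- a finiteness one reads off from the discreteness of $\Lambda_0$ and the geometric finiteness of the stabilisers); then GFERF applied to each of the finitely many ``obstruction'' elements gives finite-index subgroups avoiding them, and one intersects. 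A subtle point, already flagged in the introduction as requiring ``a more careful argument than \cite[Lemma 7.1]{BHW}'', is that when we pass to a deeper finite cover to separate $Y_i$ from $Y_j$ we might re-introduce self-intersections of some $Y_k$; the resolution is to note that passing to \emph{further} finite-index subgroups only ever \emph{unwraps} immersions --- embeddedness, disjointness, and the orthogonality-along-$S_i$ relation are all preserved under taking covers --- so one may order the separation steps and take a common finite-index refinement at the end. Verifying that the orthogonality $Y_i\cap M\perp$ is preserved (it is purely local along $S_i$, hence automatic in any cover where $S_i$ lifts) and that $Y_i\cap M$ does not acquire spurious extra components (handled by the same double-coset count) completes the argument.
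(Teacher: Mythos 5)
Your overall scaffolding (embed $M$ via Theorem \ref{thm:embedding-arithmetic}, take the $k$-hyperplanes $H_{v_i}$ orthogonal to $\matH^n$, then use Theorem \ref{thm:GFERF} to pass to a finite cover) matches the paper, but the step you treat as ``standard bookkeeping'' is precisely where the paper has to work, and your version of it has a genuine gap. GFERF separability lets you exclude an element $\gamma$ only from a finite-index subgroup that \emph{contains} a geometrically finite subgroup not containing $\gamma$. To kill a spurious intersection between $Y_i$ and $M$ (or between $Y_i$ and $Y_j$) you need a finite-index $L$ that simultaneously contains $\Gamma$ \emph{and} the relevant vertical stabiliser(s) while avoiding the bad double coset; applying separability to each stabiliser separately and intersecting the resulting subgroups does not work, since the intersection need not contain any of the stabilisers (and then $M$ itself no longer lifts, nor are the double cosets $\Stab(V_i)\gamma\Gamma$ actually avoided). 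So you are forced to apply separability to a single subgroup containing $\Gamma$ and the $\Stab(V_i)$'s --- and there the two real difficulties appear: (i) the bad element $\gamma$ may very well lie in the subgroup generated by $\Gamma$ and the $\Stab(V_i)$'s, in which case no finite cover containing all of them removes the intersection, so one must first \emph{shrink} the vertical stabilisers (i.e.\ replace the $Y_i$ by deep finite covers, the paper's subgroups $L_R$ with all translation lengths outside $\Gamma$ larger than $R$); and (ii) one must prove that the resulting generated group $G_L$ is geometrically finite and is the amalgamated product $\pi_1(M)*_{\pi_1(S_1)}\pi_1(Y_1)*\cdots$, i.e.\ that the abstract glueing $A$ injects. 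This is exactly Lemma \ref{lem:geometrically_finite}, proved in Section \ref{sec:geom_fin} via admissible Vorono\"i decompositions, the no-nesting analysis (which uses orthogonality of the $V_i$ to $H$ in an essential way), and Poincar\'e's polyhedron theorem. Your proposal never establishes, or even identifies, either point; the subtlety you do flag (that embeddedness persists in deeper covers) is true but is not the difficulty the authors refer to when they say a more careful argument than \cite[Lemma 7.1]{BHW} is needed.

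Two smaller remarks: the claim that each failure mode is recorded by finitely many double cosets is itself not automatic in the cusped finite-volume setting (intersections can escape into cusps), and the paper's combination argument avoids relying on it; and orientability is cheaper than you make it --- Theorem \ref{thm:embedding-arithmetic} already allows an orientable $X_\Lambda$, and every further cover of an orientable manifold is orientable, so no passage to $\Lambda\cap\Or^+$ is needed.
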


We prove Proposition \ref{prop:main} below in Section \ref{sec:setup}, assuming a technical lemma whose proof is postponed to Section \ref{sec:geom_fin}.

\subsection{Proof of Proposition \ref{prop:main}} \label{sec:setup}

By Theorem \ref{thm:embedding-arithmetic}, $M=\mathbb{H}^n/\Gamma$ embeds geodesically in $X_{\Lambda} = \mathbb{H}^{n+1}/\Lambda$, for a torsion-free arithmetic lattice $\Lambda<\mathrm{O}(g, k)$ of simplest type such that $\Gamma<\Lambda$, with $g=f\oplus\langle q\rangle$ for a positive $q \in \mathbb{Q}$. 

We shall need more control on the embedding in the subsequent proof (c.f. also Remark~\ref{remark:choice-independent group}),  and thus pass to a finite-index subgroup $L<\Lambda$ such that $\Gamma<L$, satisfying some additional properties described in the sequel. 

For any finite-index subgroup $L < \Lambda$ such that $\Gamma < L$, let $\pi_L\colon\matH^{n+1}\to \matH^{n+1}/L = X_L$ denote the canonical projection. We call \emph{horizontal hyperplane} the $L$-hyperplane $H$ of $\matH^{n+1}$ corresponding to the space-like vector $(0,\ldots,0,1)$ in the quadratic space $(\mathbb{R}^{n+2},g)$. The group $\Gamma<L$ now acts on all $\mathbb{H}^{n+1}$, preserving the hyperplane $H \cong \mathbb{H}^n$ without exchanging its two sides.  We have $\Stab_L(H)=\Gamma$, and we call $M=H/\Gamma\subset X_L$ the \emph{horizontal hypersurface} of $X_L$. 

For each hypersurface $S \in \mathscr S$ of $M$, we now choose a $\Gamma$-hyperplane $H_{v}$ of $H$ for an appropriate space-like vector $v$ in the quadratic space $(\mathbb{R}^{n+1},f)$ such that $H_{v}$ projects to $S \subset M$. Notice, that such $v$ is not unique, while any two choices differ only by an element of $\Gamma$. Now interpret each $v$ as a space-like vector in the quadratic space $(\mathbb{R}^{n+2},g)$. We call the corresponding $L$-hyperplane $V\subset\matH^{n+1}$ a \emph{vertical hyperplane}, and $Y = \pi_L(V)$ a \emph{vertical hypersurface} of $X_L$. Let $\mathscr Y = \{ Y_1, \ldots, Y_m \}$ be the collection of vertical hypersurfaces, with $Y_i$ associated with $S_i$ for each $i = 1,\ldots, m$. 

\begin{figure}
\centering
\includegraphics[width = 14 cm]{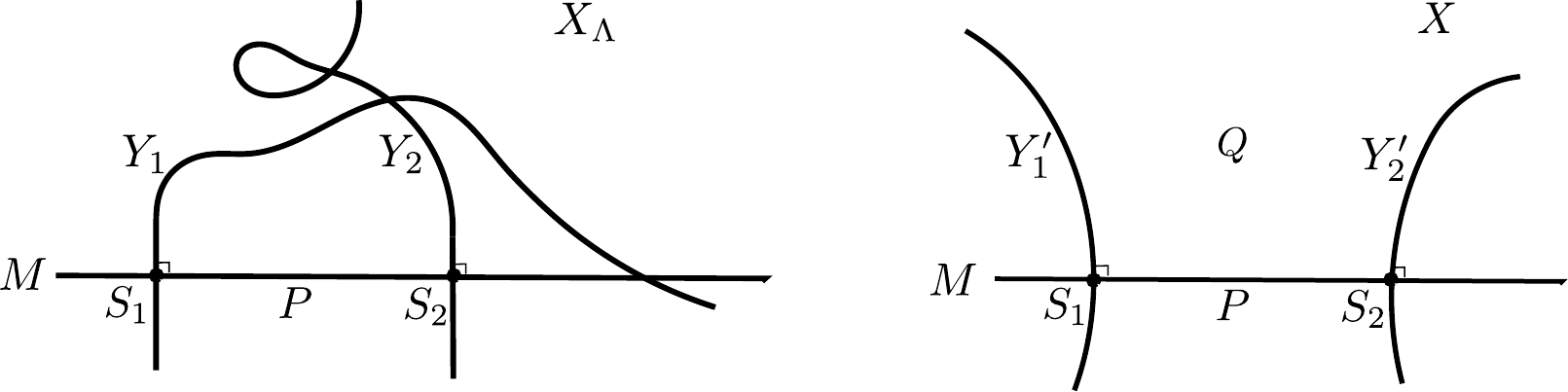}
\caption{\footnotesize A schematic picture of $X_\Lambda$ (left) and of its finite cover $X$ (right) with their horizontal and vertical hypersurfaces. A piece $P$ of $M$ embeds ``nicely'' in a piece $Q$ of $X$.  Each $Y'_i$ finitely covers $Y_i$.}
\label{fig:proof}
\end{figure}

Each $Y \in \mathscr Y$ is the image of an immersion $\iota\colon V/\mathrm{Stab}_L(V)\to X_L$, which is not necessarily an embedding. Note also that the vertical hypersurfaces are not necessarily pairwise disjoint. Moreover, since $V \perp H$ and $\pi_L(V \cap H) = S$, each $Y$ intersects $M$ orthogonally in the corresponding $S$, but there might be other intersections in $X_L$ between $Y$ and $M$, or between two distinct vertical hypersurfaces (c.f. Figure \ref{fig:proof}, left).

Our goal is to produce a finite index subgroup $L < \Lambda$ such that the following properties hold (c.f. Figure \ref{fig:proof}, right):
\begin{enumerate}
\item the group $L$ contains $\Gamma$ (so that $M$ lifts to $X_L$, as already mentioned);
\item the vertical hypersurfaces of $X_L$ are all embedded and pairwise disjoint; 
\item the intersection $Y_i \cap M$ equals $S_i$ for all $i = 1, \ldots, m$.
\end{enumerate}

Let $G_L$ denote the subgroup of $L$ generated by the stabiliser $\Gamma=\mathrm{Stab}_L(H)$ of the horizontal hyperplane, together with the stabilisers $\mathrm{Stab}_L(V_1)$, $\ldots$, $\mathrm{Stab}_L(V_m)$ of the vertical hyperplanes. 

\begin{rem}\label{remark:choice-independent group}
Since $\Gamma < L$, the group $G_L$ is independent of the particular choice of the vertical hyperplanes $V_1, \dots, V_m$ with $\pi_L(V_i)=Y_i$. Indeed, if $\pi_L(V)=\pi_L(V') \in \mathscr Y$ then $\gamma(V)=V'$ for some $\gamma \in \Gamma$. Therefore $\Stab_L(V)$ and $\Stab_L(V')$ are conjugate by $\gamma$, and they generate the same subgroup together with $\Gamma$.
\end{rem}

Consider now the abstract glueing
\begin{equation} \label{eq:abstract_glueing} 
A = M\,\cup_{S_1}\,V_1/\mathrm{Stab}_L(V_1)\,\cup_{S_2}\,\ldots\,\cup_{S_m}\,V_m/\mathrm{Stab}_L(V_m).
\end{equation}
The following lemma (which reminds of the Klein--Maskit combination theorem, c.f. also \cite[Lemma 7.1]{BHW}) will be proved in Section \ref{sec:geom_fin} by applying Poincar\'e's fundamental polyhedron theorem:

\begin{lemma}\label{lem:geometrically_finite}
There exists a finite-index subgroup $L<\Lambda$, with $\Gamma<L$, such that $G_L$ is geometrically finite and $A$ embeds $\pi_1$-injectively into $X_L$ with fundamental group $G_L$.
\end{lemma}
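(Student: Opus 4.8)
I would prove Lemma~\ref{lem:geometrically_finite} by building an explicit fundamental polyhedron for a suitable finite-index $L$ and then invoking Poincar\'e's fundamental polyhedron theorem. The starting point is the horizontal hyperplane $H$ with $\Stab_\Lambda(H)=\Gamma$ and the vertical hyperplanes $V_1,\dots,V_m$, which are orthogonal to $H$ and whose traces on $H$ are the lifts of the $S_i$. Since the $S_i$ are \emph{disjoint} and \emph{embedded} in $M=H/\Gamma$, the $\Gamma$-translates of the $H_{v_i}$ form a $\Gamma$-invariant family of pairwise disjoint hyperplanes inside $H$; each $H_{v_i}$ bounds two half-spaces in $H$, and I can choose, for each $i$, a closed half-space $H_i^-\subset H$ so that the region $D_H=\bigcap_i H_i^-$ (intersected with a fundamental domain considerations) is a fundamental domain for the action of $\Gamma$ on the complement of the $\Gamma$-orbit of the hyperplanes — more precisely, so that $D_H$ is a convex region on which $\Gamma$ acts with the $S_i$ appearing on its boundary. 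The key geometric input is that disjointness of the $S_i$ lets me pick these half-spaces coherently, i.e.\ so that $V_i\cap V_j=\emptyset$ for $i\neq j$ after restricting attention to the relevant region.

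\textbf{Building the polyhedron.}
Next I would thicken $D_H$ to dimension $n+1$: let $V_i^\pm$ denote the two closed half-spaces of $\mathbb{H}^{n+1}$ bounded by $V_i$, chosen so that $V_i^-\cap H=H_i^-$, and set $P=\bigcap_{i=1}^m V_i^-$. Because the $V_i$ are mutually orthogonal to $H$ and, by the disjointness arrangement above, their intersection pattern is "tree-like" along $H$, the region $P$ is a convex polyhedron in $\mathbb{H}^{n+1}$ whose side-pairing transformations are exactly the generators of $\Stab_L(V_i)$ coming in reflected pairs, glued along the $V_i$-facets, together with the action of $\Gamma$ which preserves $P$ (after intersecting with a $\Gamma$-fundamental domain). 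The group $G_L$ generated by $\Gamma$ and the $\Stab_L(V_i)$ acts on $\mathbb{H}^{n+1}$ with $P$ (suitably cut down by a $\Gamma$-fundamental domain in $H$) as a fundamental polyhedron: the cycle conditions of Poincar\'e's theorem are satisfied because along each $V_i$ we are gluing by elements of $\Stab_L(V_i)<L$, and the only identifications among the $V_i$-facets come from $\Gamma$, which is compatible by construction. This gives $\mathbb{H}^{n+1}/G_L\cong A$, identifying $A$'s pieces $M$ and $V_i/\Stab_L(V_i)$ with the corresponding images, and $P$ having finitely many sides forces $G_L$ to be geometrically finite via the characterization cited after Theorem~\ref{thm:GFERF}. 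The $\pi_1$-injectivity of $A\hookrightarrow X_L$ then follows because $G_L<L$ and $\pi_1(A)\cong G_L$ by the Poincar\'e theorem, so the inclusion $A\to X_L=\mathbb{H}^{n+1}/L$ is covered by the inclusion $G_L<L$.

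\textbf{The role of finite index and the main obstacle.}
The reason a finite-index $L<\Lambda$ (with $\Gamma<L$) is needed — rather than $\Lambda$ itself — is that for the naive $\Lambda$ the vertical hypersurfaces may fail to be embedded or disjoint, and the region $P$ above may not be a genuine fundamental polyhedron: spurious intersections between $Y_i$ and $M$, or between distinct $Y_i$, would violate the side-pairing setup. To fix this I would use that $\Lambda$ is GFERF (Theorem~\ref{thm:GFERF}): $\Stab_\Lambda(H)=\Gamma$ and each $\Stab_\Lambda(V_i)$ is geometrically finite, hence separable, so I can find finite-index $L<\Lambda$ containing $\Gamma$ and each $\Stab_\Lambda(V_i)$ in which the unwanted double cosets are avoided, forcing the $V_i$-translates under $L$ that meet $P$ to be only the expected ones. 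Concretely, for each "bad" element $g\in\Lambda$ producing an extra intersection I find a finite-index subgroup missing $g$ while containing the relevant stabilizers, and intersect over the finitely many bad cases. \emph{The hard part} — and the place where the argument genuinely goes beyond \cite[Lemma 7.1]{BHW} — is verifying that only finitely many such bad elements need to be excluded, and that excluding them really does turn $P$ into a fundamental domain: one must control the intersection pattern of the $\Lambda$-orbit (not just $G_L$-orbit) of the hyperplanes near $P$, show the relevant set of double cosets $\Stab_\Lambda(V_i)\backslash\Lambda/\Stab_\Lambda(V_j)$ contributing intersections with $P$ is finite, and check the Poincar\'e cycle conditions survive after passing to $L$. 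This is where I would spend the most care, and it is exactly the content deferred to Section~\ref{sec:geom_fin}.
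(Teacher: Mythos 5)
Your high-level strategy (build a finite-sided fundamental polyhedron for $G_L$, verify Poincar\'e's fundamental polyhedron theorem, and use separability to pass to a finite-index $L$) is the same as the paper's, but the concrete construction you propose does not work, and the step you explicitly defer is precisely the content of the lemma. First, the polyhedron: $P=\bigcap_i V_i^-$, even after cutting by a $\Gamma$-fundamental domain, is not a fundamental domain for $G_L$, and the side-pairing description is incorrect. Elements of $\Stab_L(V_i)$ preserve $V_i$ setwise and act on it as a lattice, so they cannot pair a facet lying \emph{on} $V_i$ with another facet; a fundamental domain for $\Stab_L(V_i)$ acting on $\matH^{n+1}$ is the orthogonal extension (a ``chimney'') of a fundamental domain for its action on $V_i$, not a half-space bounded by $V_i$. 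Likewise $\Gamma$ does not preserve $\bigcap_i V_i^-$, since $\Gamma$-translates of the $V_i$ are new vertical hyperplanes. The correct object is an intersection of genuine fundamental domains, $\calD=\calD_M\cap\calD_{Y_1}\cap\ldots\cap\calD_{Y_m}$, one for $\Gamma$ and one for each $\Stab_L(V_i)$, and the entire difficulty is making these compatible so that the facet pairings of each factor survive in $\calD$.

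Second, the passage to finite index: you propose to exclude ``bad'' elements one at a time using separability of the $\Stab_\Lambda(V_i)$ and finiteness of certain double cosets, but you acknowledge that showing only finitely many exclusions are needed, and that the cycle conditions then hold, is the hard part and leave it unproved; a priori infinitely many translates of the hyperplanes can interfere, so this scheme does not close. The paper's mechanism is different and is what actually makes the argument work: it uses separability of $\Gamma$ alone to produce subgroups $L_R<\Lambda$ (containing $\Gamma$) in which every hyperbolic, resp.\ parabolic, element outside $\Gamma$ has (Euclidean) translation length greater than $R$; it builds $\calD_M$ and the $\calD_{Y_i}$ from Vorono\"i decompositions relative to an \emph{admissible} finite set $\mathscr X\subset S_1\cup\ldots\cup S_m$, which forces the facets of the first type of $\calD_{Y_i}$ to literally coincide with facets of $\calD_M$, with identical pairing maps along the $S_i$; it then lets $R\to\infty$, so that the second-type facets of the $\calD_{Y_i}$, viewed as conformal spheres centred on $\partial_\infty V_i$, shrink and become disjoint from the facets of $\calD_M$ and from each other; and it uses orthogonality of the $V_i$ to $H$ to rule out nesting of bounding hyperplanes, so that no pairing map is lost in the intersection. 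Your proposal contains no analogue of the admissibility/compatibility along the $S_i$ nor of the no-nesting control, and without these the verification of Poincar\'e's conditions for the group generated by $\Gamma$ and the $\Stab_L(V_i)$ cannot be carried out.
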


Given that $G_L$ is geometrically finite, Theorem \ref{thm:GFERF} implies that $G_L$ is separable in $L$. A separability argument due to Scott \cite{S} implies that there exists a finite cover $X \to X_L$ such that $A$ embeds in $X$ as follows: $M$ and each $V_i/\Stab_L(V_i)$ is a totally geodesic hypersurface of $X$, each $V_i/\Stab_L(V_i)$ intersects $M$ along $S_i$ orthogonally and any two distinct hypersurfaces of the form $V_i/\Stab_L(V_i)$, $i=1,\dots,m$, are disjoint. Thus, the proof of Proposition \ref{prop:main} is complete up to Lemma \ref{lem:geometrically_finite}.

In order to prove Lemma \ref{lem:geometrically_finite}, we will find $L < \Lambda$ such that $X_L$ can be obtained by pairing a finite number of thick convex cells in $\matH^{n+1}$ isometrically along their facets, with each cell having a finite number of facets. This easily implies that the group $L$ admits a finite-sided fundamental polyhedron, and is therefore geometrically finite. 
These cells will be obtained from the Vorono\"i decomposition of $X_L$ with respect to an appropriate choice of a finite set of points in the horizontal hypersurface $M$, as we now explain.

\subsection{Relative Vorono\"i decompositions} \label{sec:adm}

Recall that, given a metric space $(X,d)$ and a collection $\mathscr X$ of points, the \emph{Vorono\"i decomposition} of $X$ with respect to $\mathscr X$ is the decomposition of $X$ into \emph{cells} $C_p = \{ x \in X \, | \, d(x,p) \leq d(x,q) \ \forall q \in \mathscr{X}, q \neq p \}$, $p \in \mathscr X$.

Let $M = \matH^n / \Gamma$ be a hyperbolic manifold, $\pi_\Gamma \colon \matH^n \to M$ the canonical projection, $\mathscr X \subset M$ 
a finite set, and $\tilde{\mathscr X} = \pi^{-1}_\Gamma(\mathscr X)$. Consider the Vorono\"i decompositions of $M$ and $\matH^n$ with respect to $\mathscr X$ and $\tilde{\mathscr X}$, respectively. Each cell $C \subset \matH^n$ of the decomposition is a convex $n$-polytope which projects down to a cell $\pi_\Gamma(C)$ of $M$. There is a unique $x \in \tilde{\mathscr X}$ such that $x \in C$, called the \emph{centre} of $C$. Similarly, $\pi_\Gamma(x)$ is the \emph{centre} of $\pi_\Gamma(C)$.

A finite-sided fundamental domain $D_M \subset \matH^n$ for the action of $\Gamma$ can be constructed by pairing together a finite number of such cells of $\matH^n$ isometrically along some of their facets. This domain naturally satisfies the hypothesis of Poincar\'e's fundamental polyhedron theorem; c.f. \cite{EP} for a detailed exposition. 

\begin{figure}
\centering
\includegraphics[width = 10 cm]{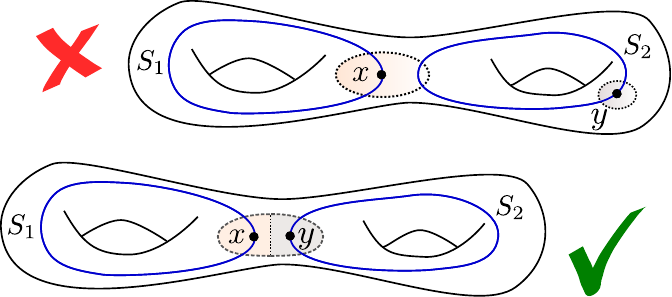}
\caption{\footnotesize A non-admissible choice for a pair of points on a surface with two disjoint geodesics (top), compared to an admissible choice (bottom). In the non-admissible case, there are points of $S_2$ which are closer to $x \in S_1$ rather than $y \in S_2$. In the admissible case, all points of $S_1$ are closer to $x$ and all points of $S_2$ are closer to $y$.}
\label{fig:admissible_vs_not}
\end{figure}

Let now $\mathscr S =\{S_1,\dots,S_m\}$
be a finite collection of pairwise disjoint, properly embedded, totally geodesic hypersurfaces of $M$. We will be interested in Vorono\"i decompositions that are ``coherent'' with respect to $\mathscr S$.

\begin{defn}\label{def:admissible}
Given $M$ and $\mathscr S$ as above, we say that a finite set $\mathscr X \subset S_1 \cup \ldots \cup S_m$ is \emph{admissible} with respect to $\mathscr S$ if in the Vorono\"i decomposition of $M$ associated with $\mathscr X$ each $S \in \mathscr S$ is covered only by the cells whose centres lie in $S$.
\end{defn}

In other words, we require the Vorono\"i decomposition of each $S \in \mathscr S$ with respect to $\mathscr X \cap S$ to coincide with the induced decomposition obtained by intersecting the cells of the Vorono\"i decomposition of $M$ with $S$. 

A random choice of $\mathscr X$ may not be admissible, as shown in Figure \ref{fig:admissible_vs_not}.

\begin{claim}\label{claim:admissible}
Given $M$ and $\mathscr S$ as above, with $M$ of finite volume, there exists an admissible set $\mathscr X$.
\end{claim}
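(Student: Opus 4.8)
The plan is to prove Claim~\ref{claim:admissible} by a compactness/perturbation argument: start from \emph{any} finite set of points that is dense enough on each hypersurface $S_i$ so that the cells of the resulting Voronoï decomposition are very small, and then argue that for sufficiently fine such a set the decomposition is automatically admissible. The key geometric input is that the hypersurfaces in $\mathscr S$ are pairwise disjoint, properly embedded, and totally geodesic, hence there is a uniform positive lower bound $2\delta$ on the distance between any two of them, and each $S_i$ has an embedded normal collar of some uniform width (here finite volume of $M$ is used to get uniformity in the cusped case: the hypersurfaces are themselves finite-volume, and outside a compact part everything happens in standard horoball neighbourhoods where the geometry is explicit).

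The main steps, in order, would be as follows. First I would fix $\delta>0$ smaller than one quarter of the minimal distance between distinct $S_i$'s and smaller than the collar width, and choose, for each $i$, a finite $\epsilon$-net $\mathscr X_i\subset S_i$ with $\epsilon<\delta/10$, using that each $S_i$ is a finite-volume hyperbolic $(n-1)$-manifold (so such a finite net exists); set $\mathscr X=\bigcup_i\mathscr X_i$. Second, I would show that with this choice every Voronoï cell $C_p$ of $M$ has diameter at most, say, $2\epsilon$: indeed any point of $M$ within distance $\delta$ of some $S_i$ is within $\epsilon$ of a point of $\mathscr X_i$ (project to $S_i$ along the collar and use the net property), and points farther than $\delta$ from every $S_i$ lie in no cell $C_p$ with $p\in\mathscr X$... — wait, that is wrong, \emph{every} point of $M$ lies in some cell. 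So instead I would restrict attention only to points of $\bigcup S_i$ and of the collar neighbourhoods: I would only need that for $x\in S_i$, the nearest point of $\mathscr X$ to $x$ lies in $S_i$. Third, and this is the crux: for $x\in S_i$ and $q\in\mathscr X_j$ with $j\neq i$, we have $d(x,q)\ge d(S_i,S_j)\ge 4\delta$, whereas $d(x,\mathscr X_i)\le\epsilon<\delta$; so the nearest net point to $x$ lies on $S_i$, and in fact the whole cell containing $x$ has its centre on $S_i$. This gives that $S_i$ is covered only by cells with centre on $S_i$, which is exactly admissibility.

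Finally I would check that the induced decomposition of $S_i$ by intersecting with $M$-cells coincides with the intrinsic Voronoï decomposition of $S_i$ with respect to $\mathscr X_i$: for $x\in S_i$ and $p,p'\in\mathscr X_i$, since $S_i$ is totally geodesic the ambient distance $d_M(x,p)$ equals the intrinsic distance $d_{S_i}(x,p)$ whenever the ambient geodesic from $x$ to $p$ stays in $S_i$ — and it does, because both endpoints lie on the totally geodesic $S_i$ and such a geodesic segment, having length $<2\epsilon<$ collar width, is the unique shortest path and therefore lies in $S_i$. Hence $x\in C_p\cap S_i$ iff $d_{S_i}(x,p)\le d_{S_i}(x,p')$ for all $p'\in\mathscr X_i$, i.e. the two decompositions agree.

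The main obstacle I anticipate is the uniformity in the non-compact (finite-volume) case: one must ensure the collar widths and the inter-hypersurface distances do not degenerate in the cusps, and that a \emph{finite} net suffices on each $S_i$. This is handled by the standard thick–thin decomposition — in each cusp the hypersurfaces either miss a deep enough horoball entirely or enter it as standard horospherical cross-sections, in which case their mutual distances and collar widths are controlled explicitly by the horoball depth — so I would phrase the argument by first choosing a compact core $K\subset M$ whose complement is a union of standard cusp neighbourhoods, obtaining the uniform bounds on $K$ by ordinary compactness and on the cusp neighbourhoods by the explicit horoball geometry, then taking $\delta$ to be the minimum of the two. Everything else is elementary hyperbolic geometry.
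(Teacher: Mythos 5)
Your compact-case argument is correct and is essentially the paper's: choose $\delta>0$ strictly below the minimal distance between distinct hypersurfaces, take a $\delta/2$-net (or finer) on each $S_i$, and observe that for $x\in S_i$ every nearest net point is forced to lie on $S_i$. Your mid-course correction (every point of $M$ lies in some cell, so restrict attention to points on the hypersurfaces) is the right fix, and your closing check that the induced and intrinsic Vorono\"i decompositions of $S_i$ agree, via the totally geodesic collar, is a useful verification that the paper leaves implicit.

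The non-compact case, however, contains a genuine gap which the appeal to the thick--thin decomposition does not repair. Two steps break. First, ``such a finite net exists'' is false for a cusped $S_i$: a non-compact finite-volume hyperbolic manifold is not totally bounded, so it admits no finite $\epsilon$-net for any $\epsilon>0$. Second, and more fundamentally, the minimal distance between distinct $S_i$'s can be zero: if $S_i$ and $S_j$ enter the same cusp of $M$, their distance tends to $0$ down that cusp, as the paper states explicitly (``some $\delta_i$ might be zero''). Hence there is no positive $\delta$ to take as ``the minimum of the two'', and your key inequality $d(x,q)\geq d(S_i,S_j)\geq 4\delta$ has no content in the cusps. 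The paper's treatment of the ends is genuinely different: truncate the cusps, place net points only on a \emph{single} horospherical cross-section $E\times\{0\}$ of each cusp, chosen so that the Euclidean Vorono\"i decomposition of $E$ is already admissible for the traces $S_j\cap(E\times\{0\})$, and then add a second finite net on the compact core. This works because of a horoball-model computation: for a point of $S_i$ at depth $t$ in a cusp, the competition between candidate centres in $\mathscr X_i$ and $\mathscr X_j$ lying on $E\times\{0\}$ is decided, uniformly in $t$, by the Euclidean Vorono\"i geometry of $E$, even though the individual distances grow like $t$ and their differences tend to $0$. Your sketch would need this replacement argument to handle the ends.
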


\begin{proof}
First assume that $M$ is compact, which implies that the hypersurfaces $S_i$ are compact as well. For each $i=1,\dots,m$, let $\delta_i>0$ be the minimum distance between $S_i$ and $\bigcup_{j \neq i} S_j$. The set of open balls $\{B_x(\delta/2):\,x\in S_i\}$ is an open covering of $S_i$. By compactness, we can extract a finite cover, which gives a finite set $\mathscr X_i = \{x_1, \dots, x_{m_i}\} \subset S_i$ whose $\delta/2$-neighbourhoods cover $S_i$. The set $\mathscr X = \bigcup_i \mathscr X_i$ of points of $M$ is obviously admissible with respect to $\mathscr S$. 

If $M$ is non-compact, some $\delta_i$ might be zero. In this case, we change our argument as follows: truncate the cusps of $M$, so that $M$ decomposes as the union of a compact set $M_{\mathrm c}$ and a finite number of cusps, each of the form $E\times [0,\infty)$, where $E$ is a compact Euclidean manifold (the section of the corresponding cusp). In this way, each $S_i$ is similarly decomposed as the union of a compact set $S_i\cap M_{\mathrm c}$ and a finite number of cusps (possibly none).

For every $i$, apply the above argument of the compact case to each Euclidean cusp section $E$ of $M$ which $S_i$ intersects, in order to obtain a finite set of points in $E \times \{0\}$. The key property here is that, in the Vorono\"i decomposition of $E$ with respect to $\mathscr S\cap (E\times \{0\})$, each set of the form $S_i\cap (E\times \{0\})$ will be covered by cells centred on $S_i$. Let $\mathscr X'_i$ be the set of all points in $S_i$ obtained in this way over all cusps of $M$. 

Now that we have dealt with the ends of $M$, we turn to the compact part $M_{\mathrm c}$. Apply once more the same argument of the compact case to each set of the form $S_i \cap M_{\mathrm c}$ in order to obtain another finite set of points $\mathscr X''_i$ in $S_i \cap M_{\mathrm c}$. Set $\mathscr X_i = \mathscr X'_i \cup \mathscr X''_i$, and $\mathscr X = \bigcup_i \mathscr X_i$. The latter is admissible with respect to $\mathscr S$: the ends of each $S_i$ are covered by the cells centred on $\mathscr X'_i$, while $ S_i \cap M_{\mathrm c}$ are covered by the cells centred on $\mathscr X''_i$.
\end{proof}

\begin{figure}
 \begin{center}
  \includegraphics[width = 7.5 cm]{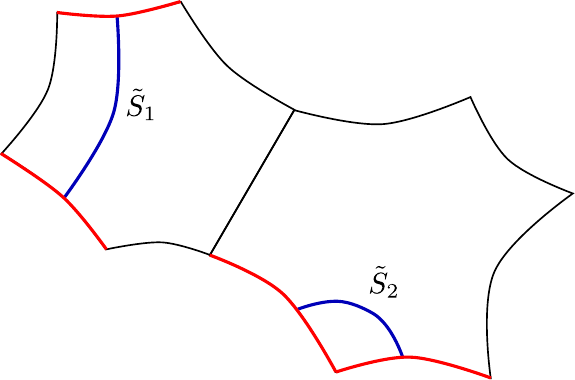}
 \end{center}
 \caption{\footnotesize Partitioning the facets of a tessellation of $\matH^2$ into two types, with those of the first type coloured red. The intersection of the cells with the lifts of all the $S_i$'s is coloured blue.} \label{fig:tessellation face type}
\end{figure} 

Given a complete hyperbolic manifold $M$ with a collection $\mathscr S$ of hypersurfaces and an admissible set $\mathscr X$ as above, it will be useful to partition the facets of the cells of $\matH^n$ into two types (c.f. Figure \ref{fig:tessellation face type}):

\begin{defn}
Let $C$ be a cell of the Vorono\"i decomposition of $\matH^n$ associated with $\mathscr X$, and $F$ be a facet of $C$. We say that $F$ is of the \emph{first type} if it intersects a lift $\tilde{S}$ of some $S \in \mathscr S$.  The facets of $C$ that are not of the first type are called facets of the \emph{second type}. 
\end{defn}
The same terminology is adopted for the bounding hyperplanes of $C$, depending on the type of the facet they contain.

\subsection{Nestedness of bounding hyperplanes} \label{sec:nested}

Let $M = \matH^n/\Gamma$ be a hyperbolic manifold, and consider the Vorono\"i decomposition of $\matH^n$ associated with some finite set $\mathscr X \subset M$. Given two discrete subgroups $G, G' < \Gamma$, let $C,C'$ be two cells centred at the same point $x \in \tilde{\mathscr X} \subset \matH^n$, and $B,B'$ two disjoint bounding hyperplanes for any of $C$ or $C'$. There is a unique halfspace $\calH$ (resp. $\calH'$) bounded by $B$ (resp. $B'$) containing $x$.

We say that $B$ and $B'$ are \emph{nested} if either $\calH \subset \calH'$ or $\calH' \subset \calH$. The halfspaces $\calH$ and $\calH'$ cannot be disjoint, since both of them contain $x$. Clearly, if both $B$ and $B'$ bound the same cell $C$, they are not nested. Otherwise, it is not clear a priori whether $B$ and $B'$ are nested or not.

This is best explained by considering the simple case where $H$ and $V$ are two non-orthogonal geodesics in $\mathbb{H}^2$ intersecting in a point $x$, as shown in Figure \ref{fig:nested-vs-not-nested}, with each of the stabilisers $G_H$ and $G_V$ of $H$ and $V$ respectively generated by a hyperbolic translation.

The endpoints of the geodesic $V$ can lie outside of the Vorono\"i domain (centred at $x$) for the translation $H$ (c.f. Figure \ref{fig:nested-vs-not-nested}, left). If the translation length along $V$ is chosen large enough, the fundamental domain $C_H$ of $G_H$ ends up being contained in the fundamental domain $C_V$ of $G_V$. In this situation some of the bounding hyperplanes for the two domains are nested.

\begin{figure}
\centering
\includegraphics[width = 10 cm]{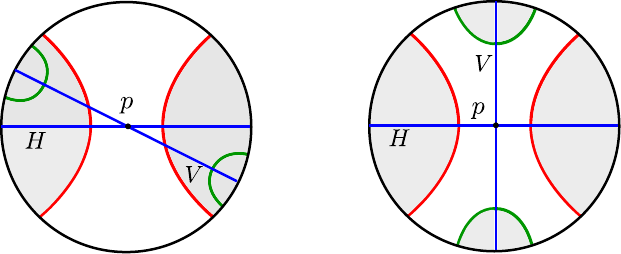}
\caption{\footnotesize The fundamental domains $C_H$ and $C_V$ for two translations along two geodesics $H$ and $V$ in the plane. The bounding hyperplanes for $C_H$ are drawn in red while those for $C_V$ are drawn in green. On the left, $H$ and $V$ are not orthogonal, and the two fundamental domains are nested. The intersection $C_H \cap C_V = C_H$ is not a fundamental domain for $G = \langle G_H, G_V \rangle$. On the right, the $H$ and $V$ are orthogonal, so there is no nesting and $C_H \cap C_V$ is a fundamental domain for $G$.}
\label{fig:nested-vs-not-nested}
\end{figure}

If $V$ and $H$ are orthogonal (c.f. Figure \ref{fig:nested-vs-not-nested}, right), no matter how short the translation along $H$ is, if the translation length along $V$ is chosen large enough the bounding hyperplanes of $C_H$ and $C_V$ are disjoint, and $C_H \cap C_V$ is indeed a fundamental domain for the group $G$ generated by the two translations (a free group on two generators). 

If we allow arbitrarily large translation length along $H$, then nesting phenomena can be avoided even if $V$ and $H$ are not orthogonal, simply because the endpoints of the $H$ and $V$ are distinct. However, in what follow we will rather forego taking subgroups of $G_H$.

\subsection{Proof of Lemma \ref{lem:geometrically_finite}  } \label{sec:geom_fin}

Let us fix $R>0$. Since $\Gamma$ is separable in $\Lambda$, there exists a finite-index subgroup $L_R<\Lambda$ containing $\Gamma$ such that every hyperbolic element $g\in L_R\smallsetminus\Gamma$ has translation length greater than $R$.

If $X_\Lambda$ and $X_{L_R}$ are non compact, fix also an arbitrary truncation of the cusps of $X_{L_R}$. We can furthermore require that any parabolic element in $g\in L_R\smallsetminus\Gamma$ has Euclidean translation length (relative to the chosen cusp truncation) greater than $R$. 

Let us fix a set $\mathscr X$ of points in the horizontal hypersurface $M$ of $X_{L_R}$ which is admissible with respect to $\mathscr S=\{S_1, \dots, S_m\}$. Consider the Vorono\"i decomposition of the horizontal hyperplane $H\subset \mathbb{H}^{n+1}$ into $n$-dimensional convex cells, which is clearly preserved by the action of $\mathrm{Stab}_{{L_R}}(H)$. A (possibly disconnected) fundamental domain $D_M \subset H$ for the action of $\Gamma$ on $H$ can be obtained by considering a set of cells of the Vorono\"i decomposition of $H$.

Now, we extend orthogonally each bounding hyperplane of the decomposition of $H$ to a hyperplane in $\matH^{n+1}$, to get a decomposition of $\mathbb{H}^{n+1}$ into $(n+1)$-dimensional convex cells. The fundamental domain $D_M\subset H$ extends to a finite-sided fundamental domain $\calD_M\subset \mathbb{H}^{n+1}$ for the action of $\mathrm{Stab}_{{L_R}}(H)$ on $\mathbb{H}^{n+1}$. The facets of $\calD_M$ can be partitioned into two types, which they inherit from those of $D_M$.

The discussion above applies similarly to the vertical hypersurfaces as follows. Let $Y \in \mathscr Y$ correspond to $S \in \mathscr S$. The set $\mathscr X \cap S$ is admissible for $Y$ with respect to $\{S\}$. Consider the associated Vorono\"i decomposition of a vertical hyperplane $V$ associated with $Y$. We can again build a fundamental domain $D_{Y}$ for the action of $\mathrm{Stab}_{{L_R}}(V)$ on $V$ consisting of cells of the Vorono\"i decomposition of $V$, and extend it to a fundamental domain $\calD_{Y}$ for the action of $\mathrm{Stab}_{{L_R}}(V)$ on $\matH^{n+1}$. We do this in the following way: for each cell of $\calD_M$ centred at a point $x \in  \tilde{S}$, we require the cell of the Vorono\"i decomposition of $V$ centred at $x$ to belong to $\calD_{Y}$.  By doing so, we obtain a one-to-one correspondence between the cells of the domain $\calD_{Y}$ and the cells of $\calD_M$ whose centres project down to $S$. Two corresponding cells $\calC \subset \calD_M$ and $\calC' \subset \calD_{Y}$ are centred at the same point $x \in \mathbb{H}^{n+1}$.

Notice that all bounding hyperplanes of the first type for the cells of $\calD_{Y}$ are also bounding hyperplanes of the first type for the cells of $\calD_M$. The pairing maps between such facets are the same both when viewed as facets of $\calD_M$ and of $\calD_{Y}$. The finite set of convex cells obtained by considering \emph{only} the halfspaces bounded by the hyperplanes of the first type is a fundamental domain for the action of the group $\mathrm{Stab}_{{L_R}}(\tilde{S})$ on $\mathbb{H}^{n+1}$. This group is clearly a subgroup of both $\mathrm{Stab}_{{L_R}}(H)$ and $\mathrm{Stab}_{{L_R}}(V)$.

This fact is the whole purpose of our careful choice of the admissible set of points $\mathscr X$: the Vorono\"i decompositions of $M$ and $Y$ agree on the corresponding $S$. As a consequence the fundamental domains for the associated groups of isometries of $\mathbb{H}^{n+1}$ share the respective facets, and the pairing maps on these facets agree. 

Since the vertical hyperplanes are pairwise disjoint and all orthogonal to the horizontal hyperplane $H$, there exists $R_0>0$ and a lattice $L = L_{R_0}$ such that the following holds: if a bounding hyperplane $B'$ of a cell $\calC'$ of $\calD_{Y}$ intersects a bounding hyperplane $B$ of a cell $\calC$ of $\calD_M$, then $B'$ is itself of the first type and is therefore a bounding hyperplane of $\calC$. The bounding hyperplanes of the second type for the cells of $\calD_{Y}$ are disjoint from those of the cells of $\calD_M$. Moreover, the bounding hyperplanes of the first type for the cells of $\calD_{Y_i}$ and $\calD_{Y_j}$ are disjoint whenever $i\neq j$.

Let us prove the latter fact. Given a subset $F$ of $\matH^n$, we denote by $\partial_\infty F$ its boundary at infinity, that is the intersection of the closure of $F$ in $\overline\matH^n = \matH^n \cup \partial_\infty \matH^n$ with $\partial_\infty \matH^n$.  Consider a cell $\calC$ for the domain $\calD_{M}$, obtained by extending orthogonally an $n$-dimensional cell $C$ of $H$. Then $\partial_\infty \calC$ consists of two conformal copies $C_1, C_2$ of the cell $C$. The closest point projection $C_j \to C$ is indeed conformal. The image in $C_j$ of $C\cap \tilde{S}$ lies on $\partial_\infty V$, for some $V$ which projects down to the vertical hypersurface $Y$ associated with $S$. Because of this, we see that $\partial_\infty V$ is disjoint from the boundary at infinity of the facets of the second type of $\calC$. This property holds true only because of orthogonality between the hyperplanes $H$ and $V$.

The boundary at infinity of each bounding hyperplane of a cell of the domain $\calD_{Y}$ is a conformal $(n-1)$-sphere in $\partial_\infty \mathbb{H}^{n+1}$ centred on $\partial_\infty V$. As $R \rightarrow \infty$, these spheres remain the same if they correspond to facets of the first type, while become arbitrarily small if they correspond to facets of the second type. At the same time, the cells of the domain $\calD_M$ don't change. When the spheres become small enough, the facets of the second type of $\calD_{Y}$ become disjoint from those of $\calD_M$, as shown in Figure \ref{fig:circle_shrink}. Also, since $V_1, \ldots, V_m$ are all orthogonal to $H$, then $\partial_\infty V_1, \ldots, \partial_\infty V_m$ can touch only in $\partial_\infty H$. Therefore, also the boundary hyperplanes of the first type for $\calD_{Y_i}$ and $\calD_{Y_j}$, $i\neq j$, eventually become disjoint.

This shows that, up to an appropriate choice of $L = L_{R_0}$ with sufficiently large $R_0$, the only intersections between the bounding hyperplanes for any of the domains $\calD_M$ or $\calD_{Y_i}$ will either happen between the bounding hyperplanes belonging to cells in a single fundamental domain, or between the bounding hyperplanes of the first type belonging to a cell of $\calD_M$ and a cell in one of the domains $\calD_{Y_i}$ (and in this case the bounding hyperplanes will coincide).

\begin{figure}
\centering
\includegraphics[width = 10 cm]{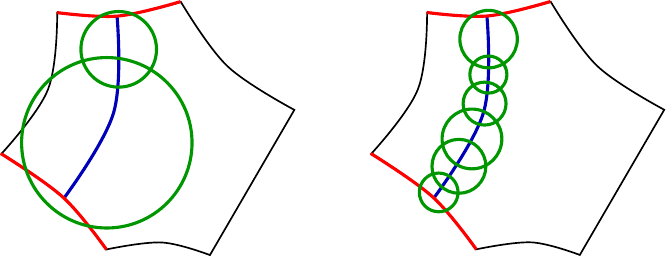}
\caption{\footnotesize A component of the boundary at infinity of a cell $\calC$ of $\calD_M$ for two different values of $R$ (increasing from left to right). Its facets of the first type are drawn in red. Its intersection with $\partial_\infty V$ is drawn in blue. The boundary at infinity of the supporting hyperplanes of the second type of a cell $\calC'$ of $\calD_{Y}$ with the same centre of $\calC$ are drawn in green. As $R$ increases, the green spheres became smaller and smaller, and their number increases. They eventually become disjoint from the facets of $\partial_\infty \calC$ of the second type.}
\label{fig:circle_shrink}
\end{figure} 
 
Now we prove that since the hyperplane $H$ is orthogonal to each $V_i$, there is no nesting between the bounding hyperplanes for a cell $\calC$ of $\calD_M$ and a cell $\calC'$ of $\calD_{Y_i}$ with the same centre. 
Indeed, given a bounding hyperplane $B$ of $\calC'$, the centre $y$ of the conformal sphere $\partial_\infty B$ belongs to $\partial_\infty V_i$, and projects down to a point in the interior of $\calC$ under the closest point projection. More importantly, $y$ is contained in one of the two components of $\partial_\infty \calC$, and this guarantees that $B$ and any of the bounding hyperplanes of $\calC$ are not nested.

Finally, consider the domain $$\calD = \calD_M \cap \calD_{Y_1} \cap \ldots \cap \calD_{Y_m} \subset \matH^{n+1}.$$ Each cell of $\calD$ is the intersection of a cell $\calC \subset \calD_M$ with a cell $\calC' \subset D_{Y_i}$, with $\calC$ and $\calC'$ centred at a common point $x \in \mathbb{H}^{n+1}$. The domain $\calD$ satisfies the hypothesis of Poincar\'e's fundamental polyhedron theorem, since each of the domains $\calD_M$ and $\calD_{Y_i}$ individually does, and there are no intersections between the hyperplanes of the second type. Since disjoint bounding hyperplanes for $\calD_M$ and $\calD_{Y_i}$ are not nested, all the pairing maps for the facets of these domains (which generate $\Gamma=\mathrm{Stab}_L(H)$ and $\mathrm{Stab}_L(V_i)$) survive as pairing maps between the facets of $\calD$.

The domain $\calD$ is therefore a fundamental domain for $G_L$, which is isomorphic to the amalgamated free product 
$$\pi_1(M)*_{\pi_1(S_1)}\pi_1(Y_1)*_{\pi_1(S_2)}\ldots *_{\pi_1(S_m)}\pi_1(Y_m).$$

Since $\calD$ is finite-sided, $G_L$ is geometrically finite, and the proof of Lemma \ref{lem:geometrically_finite} is complete.

\section{Proofs of the main theorems}\label{sec:proof-main}

We are ready to prove Theorems \ref{thm:main} and \ref{thm:counting} opening this paper. As the main result of the paper (Theorem~\ref{thm:main}) is established, it will follow that there are ``super-exponentially many'' geometrically bounding manifolds and their commensurability classes with respect to volume (Theorem \ref{thm:counting}). 

\subsection{Proof of Theorem \ref{thm:main} (embedding hyperbolic glueings)}

Let $M=P_1\cup\ldots\cup P_s$ satisfy the hypotheses of Theorem \ref{thm:main}. Each piece $P_j$ is obtained from some hyperbolic $n$-manifold $M_j$ of simplest type by cutting it open along a finite collection $\mathscr S_j$ of pairwise disjoint, totally geodesic hypersurfaces. Each manifold $M_j$ is arithmetic of simplest type with associated quadratic form $f_j$ defined over $k$.

By Proposition \ref{prop:main}, the manifold $M_j$ embeds geodesically in a hyperbolic $(n+1)$-manifold $X_j = \matH^{n+1}/L_j$ which contains a finite collection $\mathscr Y_j$ of properly embedded, pairwise disjoint totally geodesic ``vertical'' hypersurfaces, with each $Y \in \mathscr Y_j$ intersecting $M_j$ orthogonally in the corresponding $S \in \mathscr S_j$. We choose $X_j$ to be arithmetic of simplest type with associated form $g_j=f_j\oplus\langle q\rangle$ for some positive $q\in \mathbb{Q}$ which does not depend on $j$.

By cutting each $X_j$ open along the vertical hypersurfaces, we obtain some $(n+1)$-dimensional pieces $Q_1,\dots,Q_s$ so that $P_j$ is a totally geodesic hypersurface of $Q_j$ orthogonal to $\partial Q_j$, with $P_j \cap \partial Q_j = \partial P_j$. Each boundary component of $Q_j$ is either isometric to a vertical hypersurface $Y$, if $Y$ is two-sided in $X_j$, or it is an index two cover of $Y$, otherwise. Without loss of generality, we can assume that all vertical hypersurfaces are two-sided. Recall the ``abstract glueing'' $A$~from~\eqref{eq:abstract_glueing} in Section \ref{sec:setup}. We can replace the fundamental groups of one-sided $Y_j$'s by the respective index-two subgroups, and embed the new amalgamated product in a finite cover of $X_j$ so that the vertical hypersurfaces be two-sided. 

Our goal is to show that it is possible to choose $X_j$ so that the pairing maps between the boundary components of $P_1, \ldots, P_s$ producing $M$ extend to isometries between the corresponding boundary components of $Q_1, \ldots, Q_s$. In this way, by glueing these new pieces back together we will obtain an $(n+1)$-manifold $X$ into which $M$ embeds geodesically. 

Note that the boundary components of $Q_1,\ldots,$ $Q_s$ are pairwise commensurable: in other words, they have a common finite cover. This holds since we extend all the quadratic forms $f_j$ using the same rational number $q$. Indeed, the forms associated with the glueing locus of the various blocks $P_j$ are pairwise projectively equivalent, and therefore so are their corresponding extensions. The latter define the commensurability classes of the connected components of $\partial Q_1,\ldots,$ $\partial Q_s$.

Now we can ensure that the respective boundary components of $Q_1,\ldots,$ $Q_s$ are pairwise isometric. In order to do this, introduce a new abstract glueing $A_j'$ obtained by attaching to $M_j$ along each $S \in \mathscr S_j$ the respective $V/\Delta$, where $\Delta$ corresponds to a common finite index cover for the two appropriate boundary components of $Q_j$ and $Q_l$. The fundamental group of $A'_j$ is geometrically finite and therefore separable in $L_j$. So $A'_j$ embeds in a finite cover $X'_j$ of $X_j$ corresponding to some finite index subgroup $L'_j<L_j$. 

By cutting each $X'_j$ open along its vertical hypersurfaces, we obtain a new collection of pieces $Q'_j$ such that the pairing maps between $P_1, \ldots, P_s$ extend to the pairing maps for $Q'_1, \ldots, Q'_s$, as each $M_j$ lifts to the respective $Q'_j$. By glueing these new blocks together with the found pairing map, and then doubling the resulting manifold with boundary (if such boundary is non-empty), we finally produce an $(n+1)$-dimensional hyperbolic manifold $X$ into which $M$ embeds geodesically. 

Assume now that $M$ is orientable. We still need some work to ensure that the manifold $X$ can be chosen to be orientable as well. Note that each piece $P_j$ is orientable. Let $C \subset \partial P_j$ be a boundary component of $P_j$, and $D \subset \partial Q_j$ the corresponding boundary component of $Q_j$, which contains $C$ as a totally geodesic submanifold. The piece $Q_j$ (and so $D$) can be chosen to be orientable. We furthermore require $D$ to admit an orientation reversing isometry which acts by fixing $C$ pointwise and exchanging its two sides. This can always be achieved up to considering an appropriate finite-index cover, as we now show. 

Let the vertical hypersurface $Y \in \mathscr Y_j$ of $X_j$ correspond to $D \subset \partial Q_j$, and $S \in \mathscr S_j$  correspond to $C \subset \partial D$. The hypersurfaces $M_j$ and $Y$ lift to two orthogonal hyperplanes $H$ and $V$, respectively, in the universal cover $\mathbb{H}^{n+1}$ of $X_j$, with $H\cap V$ corresponding to the universal cover of $S$. Let $g$ be the reflection in $H$, and let $\Delta < \Isom(\matH^{n+1})$ denote the fundamental group of $Y$, which acts on $\mathbb{H}^{n+1}$ by preserving $V$. Clearly $g$ fixes $H\cap V$ pointwise and preserves $V$.

We now consider the group $\Delta' = \Delta\, \cap\, g \Delta g^{-1}$. Since the hyperplane $H$ is a $k$-hyperplane, the reflection $g$ lies in $\mathrm{O}(f_j,k)$ and hence it commensurates $\Delta$. Therefore the group $\Delta'$ has finite index in $\Delta$ and we denote by $Y'$ the associated finite-index cover of $Y$. Since $g$ fixes $V\cap H$ pointwise, it commutes with all elements of $\pi_1(S)$ and therefore $S$ lifts to $Y'$. Moreover, $g$ normalises $\Delta'$ and this means that $g$ corresponds to an orientation-reversing isometric involution of $Y'$ which fixes the hypersurface $S$ pointwise, while exchanging its two sides. 

Thus, in the abstract glueing $A'_j$ we can require the vertical manifolds $V/\Delta$ to admit such an orientation-reversing involution. We can therefore freely prescribe the orientation class of the glueing maps between the boundary components of the pieces $Q_1,\dots,Q_s$, without changing the manifold $M$ which we wish to embed.

In particular, we can make the resulting manifold $X$ orientable, containing $M$ as a two-sided hypersurface, and the proof of Theorem \ref{thm:main} is complete.

\begin{rem}\label{rem:embedding-preserves-type}
 Our embedding procedure for non-arithmetic manifolds clearly preserves their type: Gromov--Pyatetski-Shapiro manifolds embed into Gromov--Pyatetski-Shapiro ma\-ni\-folds, and similarly for Agol--Belolipetsky--Thomson manifolds. 
\end{rem} 

\subsection{Proof of Theorem~\ref{thm:counting} (counting geometric boundaries)}

In this section we follow the idea by Gelander and Levit from \cite{GL}.

Let $k$ be either $\matQ$ or $\matQ(\sqrt2)$, depending on whether we want to consider cusped or closed manifolds, respectively. Consider the quadratic form
$$f_n(x) =
\begin{cases}
-2x^2_0 + x^2_1 + \ldots + x^2_n & \mbox{if}\ k = \matQ\,, \\
-\sqrt{2} x^2_0 + x^2_1 + \ldots + x^2_n & \mbox{if}\ k = \matQ(\sqrt2).
\end{cases}
$$
Then, let $f^{\rma^\pm}$, $f^{\rmb^\pm}$, $f^\rmu$ and $f^\rmv$ be six non-equivalent admissible quadratic forms 
$$f^\rmx(x) = f_{n-1}(x) + p_\rmx \cdot x^2_n,$$
over $k$, where $p_\rmx \in R_k$ is a prime and $\rmx$ is any of the six symbols $\rma^\pm, \rmb^\pm, \rmu, \rmv$.
There are infinitely many choices for such a collection of quadratic forms \cite[Lemma 4.11]{GL}. 

Now, let $S' = \matH^{n-1} / \Delta$ be a non-orientable arithmetic manifold of simplest type with associated form $f_{n-1}$ and $\Delta \subset \Or(f_{n-1},k)$. Notice that such a manifold $S'$ certainly exists. Indeed, the lattice $\Or(f_{n-1}, R_k)$ clearly contains orientation-reversing elements, such as the reflection in the orthogonal hyperplane to any space-like vector in the standard basis of $\mathbb{R}^{n+1}$. By \cite[Theorem 1.2]{LR2}, $\Or(f_{n-1},R_k)$ has a torsion-free subgroup $\Gamma'$ of finite index containing an orientation-reversing element. Take now $S$ to be the  orientation cover of the manifold $S' = \matH^{n-1} / \Gamma'$, and $\varphi$ an involution of $S$ such that $S' \cong S / \langle \varphi \rangle$.

\begin{prop} \label{prop:counting}
For each symbol $\rmx \in \{\rma^\pm, \rmb^\pm, \rmu, \rmv\}$, there exists an arithmetic manifold $M_\rmx = \matH^n / \Gamma_\rmx$ of simplest type with associated form $f^\rmx$ and $\Gamma_\rmx \subset \Or(f^\rmx, k)$, from which one can carve a piece $P_\rmx$ whose boundary consists of 2 (resp.\ 4) copies of $S$ if $\rmx \in \{\rma^\pm, \rmb^\pm\}$ (resp. $\rmx \in \{ \rmv, \rmu\}$). Moreover, the pieces of the form $P_{\rmv}$ can be chosen to be non-orientable.
\end{prop}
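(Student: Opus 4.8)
The plan is to realise $S$ as a tightly controlled embedded, two-sided, totally geodesic hypersurface of an arithmetic $n$-manifold carrying the form $f^\rmx$, and then to cut that manifold open along a suitable collection of disjoint copies of it. First I would locate the relevant hyperplane: writing $f^\rmx=f_{n-1}\oplus\langle p_\rmx\rangle$ and $e_n=(0,\dots,0,1)$, the vector $e_n$ is a space-like $k$-vector with $f^\rmx(e_n)=p_\rmx$ and the restriction of $f^\rmx$ to $e_n^\perp$ is exactly $f_{n-1}$. Hence $H_{e_n}\subset\mathbb H^n$ is a copy of $\mathbb H^{n-1}$ carrying the form $f_{n-1}$, the group $\Stab_{\Or(f^\rmx,R_k)}(H_{e_n})$ restricts on $H_{e_n}$ to a group commensurable with $\Or(f_{n-1},R_k)$, and $\pi_1(S)<\Or(f_{n-1},R_k)$ embeds into $\Or(f^\rmx,R_k)$ as a subgroup of $\Stab(H_{e_n})$ preserving each side of $H_{e_n}$ and acting on $H_{e_n}$ as $\pi_1(S)$. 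I would also record two elements of $\Or(f^\rmx,R_k)$: the integral reflection $r=\mathrm{diag}(1,\dots,1,-1)$ in $H_{e_n}$, which centralises $\pi_1(S)$, and, for suitable choices of the primes $p_\rmx$, a loxodromic $\rho$ whose axis is orthogonal to $H_{e_n}$, produced from the unit group of the anisotropic rank-two $k$-form carried by $\mathrm{span}(e_0,e_n)$; there is enough freedom in the list of admissible forms of \cite[Lemma 4.11]{GL} to do this for all six symbols simultaneously.

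Next I would embed $S$ and arrange it to be non-separating. As $\pi_1(S)<\Or(f^\rmx,R_k)$ is geometrically finite (its convex core is the finite-volume hypersurface $S$), it is separable by Theorem~\ref{thm:GFERF}, and the standard argument used for Theorem~\ref{thm:embedding-arithmetic}, refined as in Section~\ref{sec:geom_fin}, yields a torsion-free finite-index subgroup $\Gamma_\rmx<\Or(f^\rmx,R_k)<\Or(f^\rmx,k)$ containing $\pi_1(S)$ with $\Stab_{\Gamma_\rmx}(H_{e_n})=\pi_1(S)$, so that $H_{e_n}$ descends to an embedded two-sided totally geodesic hypersurface $S_0\cong S$ of $M_\rmx:=\mathbb H^n/\Gamma_\rmx$. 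This already makes $M_\rmx$ arithmetic of simplest type with form $f^\rmx$ and fundamental group inside the $k$-points, as required. Some power $\rho^s$ of $\rho$ then lies in $\Gamma_\rmx$ automatically; letting $\alpha$ be the segment of the axis of $\rho$ joining $H_{e_n}$ to $\rho^sH_{e_n}$, I would impose (again by separability) the extra requirement that $\alpha$ meet no $\Gamma_\rmx$-translate of $H_{e_n}$ other than $H_{e_n}$ and $\rho^sH_{e_n}$. Since the axis is orthogonal to $H_{e_n}$, the closed geodesic that is the image of $\alpha$ in $M_\rmx$ then crosses $S_0$ transversally in exactly one point, whence $[S_0]\neq 0$ in $H_{n-1}(M_\rmx;\mathbb Z/2)$ and $S_0$ is non-separating.

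Now I would carve the pieces. For $\rmx\in\{\rma^\pm,\rmb^\pm\}$ it suffices to cut $M_\rmx$ open along the non-separating hypersurface $S_0$: the result is a connected piece $P_\rmx$ whose boundary is a union of two copies of $S$. For $\rmx\in\{\rmu,\rmv\}$ I would run the same construction with two disjoint $\Or(f^\rmx,R_k)$-translates $H_{e_n}$ and $H'$ of the hyperplane — each still carrying the form $f_{n-1}$ — and two loxodromics orthogonal to $H_{e_n}$ and to $H'$ respectively, obtaining inside a single $M_\rmx$ two disjoint embedded copies $S_0,S_0''\cong S$ whose classes in $H_{n-1}(M_\rmx;\mathbb Z/2)$ are detected by the two distinct closed geodesics and are therefore linearly independent; cutting $M_\rmx$ open along $S_0\sqcup S_0''$ then leaves a connected piece $P_\rmx$ whose boundary is a union of four copies of $S$. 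Finally, to make $P_\rmv$ non-orientable I would additionally put into $\Gamma_\rmv$ an orientation-reversing element of $\Or(f^\rmx,R_k)$, built from the integral reflection $r$ and a loxodromic supported away from the cutting hypersurfaces, so that the carved piece acquires an orientation-reversing loop disjoint from its boundary; the boundary components remain copies of the orientable manifold $S$.

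The routine ingredients here are the identification of $H_{e_n}$ with the form $f_{n-1}$ and the separability input of Theorem~\ref{thm:GFERF}. The delicate point — and where I expect the real work — is the middle step: producing one finite-index $\Gamma_\rmx$ that simultaneously pins the stabiliser $\Stab_{\Gamma_\rmx}(H_{e_n})$ down to exactly $\pi_1(S)$, embeds (and, in the $\rmu,\rmv$ case, keeps disjoint) the prescribed copies of $S$, retains the chosen loxodromics, and clears every stray translate of $H_{e_n}$ away from the relevant axis segments, all while these requirements remain mutually compatible. This is a more elaborate version of the separability bookkeeping carried out in the proof of Lemma~\ref{lem:geometrically_finite}.
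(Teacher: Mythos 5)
Your proposal is correct in spirit but genuinely different from the paper's argument, in two notable ways. First, where the paper delegates the production of non-disconnecting copies of $S$ to \cite[Proposition 4.3]{GL} as a black box (after first embedding $S$ into some $M'_\rmx$ via Theorem~\ref{thm:embedding-arithmetic}), you construct them directly: you pin down $\Stab_{\Gamma_\rmx}(H_{e_n})=\pi_1(S)$ via separability, manufacture loxodromics orthogonal to the cutting hyperplanes from anisotropic binary $k$-subforms, and then detect non-separation homologically via intersection numbers with those closed geodesics. This is essentially a special case of the Gelander--Levit argument, re-derived from scratch; it works, but the ``delicate bookkeeping'' you flag (keeping stray translates of $H_{e_n}$ off the chosen axis segments, keeping the two copies disjoint in the $\rmu,\rmv$ cases, all inside a single finite-index $\Gamma_\rmx$) is precisely what \cite[Proposition 4.3]{GL} packages up. Second, and more substantially, your mechanism for making $P_\rmv$ non-orientable is different: you arrange for the ambient $M_\rmv$ itself to be non-orientable, by adjoining an orientation-reversing element with an orientation-reversing closed geodesic missing the two cutting hypersurfaces, and then carve $P_\rmv$ by cutting along just two disjoint copies of $S$. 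The paper instead keeps all $M_\rmx$ orientable, cuts $M_\rmv$ along \emph{three} non-disconnecting disjoint copies of $S$ (producing six boundary components), and then re-glues one pair of boundary components using the orientation-reversing involution $\varphi$ of $S$ inherited from $S\to S'$ being an orientation double cover. Both routes yield a non-orientable $P_\rmv$ with four boundary components, each isometric to $S$; your version even has the modest virtue that $P_\rmv$ is a genuine piece of the arithmetic $M_\rmv$ in the strict sense of the paper's definition, whereas the paper's $P_\rmv$ is a piece only after the $\varphi$-regluing is absorbed into the pairing maps used in the proof of Theorem~\ref{thm:counting}. The one point that deserves more care in your write-up is the compatibility of the orientation-reversing element with torsion-freeness of $\Gamma_\rmv$ (you must take a glide-type element, not the reflection $r$ itself) and with the requirement that the copies of $S$ stay two-sided and embedded after adjoining it; this is standard (as in \cite[Theorem~1.2]{LR2}) but should be said explicitly.
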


\begin{proof}
By Theorem \ref{thm:embedding-arithmetic}, for every $\rmx$ we can embed $S$ geodesically into some orientable arithmetic $M'_\rmx = \matH^n / \Gamma'_\rmx$ of simplest type with $\Gamma'_\rmx \subset \Or(f^\rmx, k)$. We now apply \cite[Proposition 4.3]{GL} in order to build orientable manifolds $M_\rmx$ such that:
\begin{enumerate}
\item if $\rmx \in \{ \rma^\pm, \rmb^\pm \}$, $M_\rmx$ contains a non-disconnecting copy of $S$;
\item the manifold $M_\rmu$ contains two disjoint copies of $S$ such that their union does not disconnect $M_\rmu$;
\item the manifold $M_\rmv$ contains three disjoint copies of $S$ such that their union does not disconnect $M_\rmv$.
\end{enumerate}
In order to build the pieces of the form $P_\rmx$ for $\rmx \in \{\rma^\pm, \rmb^\pm\}$, we simply cut open $M_\rmx$ along $S$. The resulting manifold has two totally geodesic boundary components. Similarly, in order to build $P_\rmu$ we cut $M_\rmu$ along the two copies of $S$, thus obtaining a piece with four boundary components.

Finally, we build $P_\rmv$ in two steps. We first cut $M_\rmv$ open along the three copies of $S$ in order to obtain a manifold with six totally geodesic boundary components, each isometric to $S$. We choose two boundary components which are the result of cutting along a single copy of $S$ in $M_\rmv$ and identify them isometrically using the orientation-reversing isometry $\varphi$ of $S$. By doing so, we obtain a non-orientable piece $P_\rmv$ with four boundary components, each isometric to $S$.
\end{proof}

Now, for every finite $4$-regular rooted simple graph with edges labelled by $\rma^\pm$ and $\rmb^\pm$, we put $P_\rmv$ at the root, $P_\rmu$ at all the other vertices, and $P_{\rmx}$ at each $\rmx$-labelled edge, whenever $\rmx \in \{ \rma^\pm, \rmb^\pm \}$.

After pairing isometrically the boundary components of the various pieces as prescribed by the graph (any identification of the boundary components of the pieces with the edges of the graph and any pairing isometry works), we get a hyperbolic manifold $M'$ with empty boundary. Such manifold is non-orientable because the piece $P_\rmv$ is non-orientable. 

As follows from the proof of \cite[Proposition 3.3]{GL}, for $m \in  \matZ$ large enough there are at least $m^{cm}$ such graph with at most $m$ vertices, so the number of manifolds $M'$ of volume $\leq v$ produced in this way is at least $v^{cv}$ for $v \gg 0$. These manifolds are pairwise incommensurable \cite[Section 4]{GL} and therefore so are their orientable double covers.

For any manifold $M'$ constructed above, we apply Theorem \ref{thm:main} to its orientation cover $M$, and embed it geodesically into some  orientable $X$. Since $M$ has an orientation-reversing fixed-point-free isometric involution, it also bounds geometrically. The proof of Theorem \ref{thm:counting} is complete.

\section{Manifolds that do not embed geodesically} \label{sec:rmk}

We conclude the paper with some additional observations on hyperbolic manifolds that do not embed geodesically.

It can be easily shown that not all hyperbolic surfaces embed geodesically. Indeed, consider a finite-area surface $S = \mathbb{H}^2/\Gamma_S$ that embeds totally geodesically into a finite-volume $3$-manifold $M = \mathbb{H}^3/\Gamma_M$. Up to conjugation, we can suppose that $\Gamma_S < \Gamma_M$, and thus for the trace fields we have $K_S = \mathbb{Q}[\mathrm{tr}\, \gamma : \gamma \in \Gamma_S] \subseteq \mathbb{Q}[\mathrm{tr}\, \gamma : \gamma \in \Gamma_M] = K_M$. 

As a consequence of the Mostow--Prasad rigidity, the trace field $K_M$ has to be an algebraic number field. However, it is not hard to produce a surface $S$ with $K_S$ being transcendental.  Nevertheless, as shown in \cite{FS}, those surfaces that embed geodesically form a countable dense subset of the moduli space.  

Except for the above, it is unknown if there exists an $n$-dimensional ($n\geq 3$) closed or cusped hyperbolic manifold that does not embed geodesically. 
Notice that there are hyperbolic manifolds which embed geodesically but do not bound geometrically. As suggested by Alan Reid to the authors, the Seifert--Weber dodecahedral space geodesically embeds by Theorem \ref{thm:embedding-arithmetic}, but has non-integral $\eta$-invariant and therefore does not bound geometrically by \cite{LR1}. In particular, Long and Reid's obstruction for bounding geometrically does not give any obstruction on embedding geodesically. See \cite{KRR} for similar examples of cusped 3- and 4-manifolds.

It would be also interesting to know if there exists a hyperbolic manifold without finite covers that embed geodesically or, conversely, if all hyperbolic manifolds do embed virtually.

In addition to the above list, at the moment we do not know if one of $\beta_3(v)$ or $B_3(v)$ is finite for $v$ sufficiently large (c.f. \cite[Question 1.6]{KR}). Recall that $\beta_3(v)$ denotes the number of $3$-dimensional hyperbolic geometric boundaries of volume $\leq v$ up to isometry, and $B_3(v)$ is the number of commensurability classes of $3$-dimensional hyperbolic geometric boundaries of volume $\leq v$.

\end{document}